\documentclass[a4paper,reqno, 11pt]{amsart}  
\usepackage[margin=1in]{geometry}
\usepackage[centertags]{amsmath}
\usepackage{amstext,amssymb,amsopn,amsthm}
\usepackage{nicefrac, esint}
\usepackage{dsfont}
\usepackage{thmtools}
\usepackage{graphicx}
\usepackage{hyperref}

\declaretheorem[name=Theorem, numberwithin=section]{theorem}
\newtheorem{corollary}[theorem]{Corollary}
\newtheorem{lemma}[theorem]{Lemma}
\newtheorem{proposition}[theorem]{Proposition}
\newtheorem{definition}[theorem]{Definition}
\newtheorem{assumption}[theorem]{Assumption}

\newtheorem*{example*}{Example}
\newtheorem*{robremark*}{Robustness remark}

\declaretheoremstyle[bodyfont=\normalfont]{remark-style}
\declaretheorem[name={Remark}, style=remark-style, unnumbered]{remark}

\numberwithin{equation}{section}

\theoremstyle{plain}

\newcommand{\N}{\mathds{N}}
\newcommand{\R}{\mathds{R}}

\newcommand{\BIGOP}[1]
{
\mathop{\mathchoice%
{\raise-0.22em\hbox{\huge $#1$}}%
{\raise-0.05em\hbox{\Large $#1$}}{\hbox{\large $#1$}}{#1}}}

\def\XXint#1#2#3{{\setbox0=\hbox{$#1{#2#3}{\int}$}
     \vcenter{\hbox{$#2#3$}}\kern-.5\wd0}}

\newcommand{\BIGboxplus}{\mathop{\mathchoice%
{\raise-0.35em\hbox{\huge $\boxplus$}}%
{\raise-0.15em\hbox{\Large $\boxplus$}}{\hbox{\large $\boxplus$}}{\boxplus}}}


\DeclareMathOperator{\dist}{dist}

\DeclareMathOperator{\radius}{radius}
\DeclareMathOperator{\cent}{center}

\renewcommand{\d}{\textnormal{d}}

\title{Coercivity estimates for integro-differential operators}
\date{\today}
\thanks{Luis Silvestre is supported in part by NSF grant DMS-1764285. Jamil Chaker is supported by DFG Forschungsstipendium through Project 410407063.}

\author{Jamil Chaker}
\address{(\textnormal{J. Chaker}), Mathematics Department, University of Chicago, Chicago, Illinois 60637, USA}
\email{jchaker@math.uchicago.edu}

\author{Luis Silvestre}
\address{(\textnormal{L. Silvestre}), Mathematics Department, University of Chicago, Chicago, Illinois 60637, USA}
\email{luis@math.uchicago.edu}

\begin{document}
\begin{abstract}
We provide a general condition on the kernel of an integro-differential operator so that its associated quadratic form satisfies a coercivity estimate with respect to the $H^s$-seminorm.
\end{abstract}
\maketitle
\section{Introduction}

In this article, we are interested in coercivity estimates for integro-differential quadratic forms in terms of fractional Sobolev norms. More precisely, we seek general conditions on a kernel $K(x,y)$ so that the following inequality holds for some constant $c>0$ and any function $u \in H^s$,
\begin{equation} \label{e:coercivity}
 \iint_{\R^d \times \R^d} |u(x)-u(y)|^2 K(x,y) \d x \d y \geq c \|u\|_{\dot H^s}^2.
\end{equation}
Here, $\dot H^s$ refers to the homogeneous fractional Sobolev norm whose standard expression is given by
\[ \|u\|_{\dot H^s}^2 = c_{d,s} \iint_{\R^d \times \R^d} |u(x)- u(y)|^2 |x-y|^{-d-2s} \d x \d y = \int_{\R^d} |\hat u(\xi)|^2 |\xi|^{2s} \d \xi .\]

The quadratic form is naturally associated with the linear integro-differential operator
\begin{equation} \label{e:integro-differential_operator}
 Lu (x) = PV \int_{\R^d} (u(y) - u(x)) K(x,y) \d y.
\end{equation}

Equations involving integro-differential diffusion like \eqref{e:integro-differential_operator} have been the subject of intensive research in recent years. The understanding of the analog of the theorem of De Giorgi, Nash and Moser in the integro-differentiable setting plays a central role in the regularity of nonlinear integro-differential equations (See \cite{komatsu1995}, \cite{basslevin2002}, \cite{barlow2009non}, \cite{kassmann2009priori}, \cite{MR3169755}, \cite{chan2011}, \cite{kassmann2013regularity}, \cite{Dyda-Kassmann-2015}, \cite{Imbert-Silvestre-2016} and references therein). It concerns the generation of a H\"older continuity estimate for solutions of parabolic equations of the form $u_t = Lu$, with potentially very irregular kernels $K$. There are diverse results in this direction with varying assumptions on $K$. The two key conditions that are necessary for this type of results are the coercivity condition \eqref{e:coercivity} and the boundedness of the corresponding bilinear form:
\begin{equation} \label{e:bilinear_bounded}
 \iint (u(y) - u(x))(v(y)-v(x)) K(x,y) \d x \d y \leq C \|u\|_{H^s} \|v\|_{H^s}.
\end{equation}
The initial works in the subject (like \cite{komatsu1995}, \cite{basslevin2002} or \cite{chan2011}) were focusing on kernels satisfying the convenient point-wise non-degeneracy assumption $\lambda |x-y|^{-d-2s} \leq K(x,y) \leq \Lambda |x-y|^{-d-2s}$. These two inequalities easily imply \eqref{e:coercivity} and \eqref{e:bilinear_bounded}. However, \eqref{e:coercivity} and \eqref{e:bilinear_bounded} hold under much more general assumptions. In \cite{kassmann2013regularity} and \cite{Dyda-Kassmann-2015}, the coercivity estimate \eqref{e:coercivity} is an assumption of the main theorem and some examples are given where the estimate applies to degenerate kernels. There are also recent applications of this framework to the Boltzmann equation (See \cite{Imbert-Silvestre-2016}) where the kernels are not point-wise comparable to $|x-y|^{-d-2s}$ and yet \eqref{e:coercivity} and \eqref{e:bilinear_bounded} hold.

While we know a fairly satisfactory general condition that ensures \eqref{e:bilinear_bounded} (See Section 4.1 in \cite{Imbert-Silvestre-2016}), assumptions that would ensure \eqref{e:coercivity} are not well understood. Simple examples of the form  $K(x,y) = b((x-y)/|x-y|) |x-y|^{-d-2s}$ can be analyzed using Fourier analysis (See \cite{ros2016regularity}) and they suggest that a condition that implies \eqref{e:coercivity} might be that for any point $x$, $r>0$ and any unit vector $e \in S^{d-1}$, we have
\begin{equation} \label{e:conjecture}
 \int_{B_r(x)} ((y-x) \cdot e)_+^2 K(x,y) \d y \geq \lambda r^{2-2s}.
\end{equation}
In \cite{Dyda-Kassmann-2015}, it is conjectured that \eqref{e:conjecture} implies \eqref{e:coercivity}. That conjecture is also mentioned in \cite{Imbert-Silvestre-2016}. We are not yet able to determine whether \eqref{e:conjecture} is sufficient to ensure that \eqref{e:coercivity} holds. We make the following assumption on the kernel. Essentially, it says that from every point $x$, the nondegeneracy set $\{y : K(x,y) \gtrsim |x-y|^{-d-2s}\}$ has some density in all directions.

\begin{assumption}\label{assumption}
There is $\mu\in(0,1)$ and $\lambda>0$ such that for every ball $B\subset\R^d$ and $x\in B$:
\begin{equation}\label{eqassum}
 |\{z\in B\colon K(x,z) \geq \lambda |x-z|^{-d-2s}\}| \geq \mu |B|. \tag{A1}
\end{equation}
\end{assumption}

\begin{remark} 
Note that we aim to prove estimates for energy forms and sets of measure zero can be neglected for integration.
Hence, \autoref{assumption} could be effortlessly relaxed by assuming the property \eqref{eqassum} for almost every $x\in B$ instead of every $x\in B$.
\end{remark}

We now state our main results.
\begin{theorem}\label{thm:coercive}
Assume there exist $\lambda>0$ and $\mu\in(0,1)$ such that the kernel $K$ satisfies \autoref{assumption}. 
There is a constant $c>0$, depending on the dimension $d$ and $\mu$ only, such that for every $u:\R^d\to\R$,
\[ \int_{\R^d}\int_{\R^d} (u(x)-u(y))^2\, K(x,y) \, \d y \, \d x \geq c\lambda \|u\|_{\dot{H}^s(\R^d)}^2. \]
\end{theorem}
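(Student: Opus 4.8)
The plan is to bound the $\dot H^s$-seminorm, expressed through its double-integral formula with kernel $|x-y|^{-d-2s}$, by the given quadratic form, using Assumption~\ref{assumption} to replace the Gagliardo kernel at each pair $(x,y)$ by the actual kernel $K$ at a nearby pair. First I would fix $x$ and, for each $y$, look at a ball $B = B_{r}(x')$ with $r \sim |x-y|$ chosen so that both $x$ and $y$ lie comfortably inside $B$; by \eqref{eqassum} applied to the point $x$, a fixed fraction $\mu$ of the points $z \in B$ are ``good'' in the sense that $K(x,z) \geq \lambda |x-z|^{-d-2s} \gtrsim \lambda r^{-d-2s}$. Symmetrically, applying \eqref{eqassum} at the point $y$ gives a fraction $\mu$ of $z \in B$ with $K(y,z) \gtrsim \lambda r^{-d-2s}$. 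Hence at least a fraction $2\mu - 1$ \dots\ but $\mu$ need not exceed $1/2$, so instead I would not intersect the two sets at the same scale; rather I would route through an intermediate point and use the triangle inequality $|u(x)-u(y)|^2 \lesssim |u(x)-u(z)|^2 + |u(z)-u(y)|^2$, integrating $z$ over the good set for $x$ in one term and over the good set for $y$ in the other.

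Concretely, I would prove the pointwise-in-$(x,y)$ inequality
\[
\frac{|u(x)-u(y)|^2}{|x-y|^{d+2s}} \;\lesssim\; \frac{1}{\mu\,|B|}\int_{G_x \cap B}\!\Big( |u(x)-u(z)|^2 + |u(z)-u(y)|^2\Big)\, \frac{\d z}{|x-y|^{d+2s}},
\]
where $G_x = \{z : K(x,z)\geq \lambda|x-z|^{-d-2s}\}$ has measure at least $\mu|B|$ in $B$ by the Assumption, and then — since on $G_x\cap B$ we have $|x-y|^{-d-2s} \sim |x-z|^{-d-2s} \lesssim \lambda^{-1} K(x,z)$ for the first term — dominate $|u(x)-u(z)|^2 |x-y|^{-d-2s} \lesssim \lambda^{-1} |u(x)-u(z)|^2 K(x,z)$. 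For the second term $|u(z)-u(y)|^2$ I need $K$ to be nondegenerate between $z$ and $y$; this is exactly where I would instead apply the Assumption at the point $y$ (not $x$) to get a set $G_y$ of density $\mu$ in $B$, and carry out the same domination with $K(y,z)$. Integrating the resulting inequality in $x$ and $y$ over $\R^d\times\R^d$ and using Fubini to re-sum the $z$-integral back into $\iint (u(x)-u(z))^2 K(x,z)\,\d z\,\d x$ (up to the harmless relabeling $x\leftrightarrow z$ and $y\leftrightarrow z$, and after checking that the leftover geometric factors $\int |x-y|^{-d-2s}\,\d y$ over the relevant annulus converge and scale correctly) yields the claimed bound with $c \sim c(d,\mu)$.

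The main obstacle I anticipate is making the ``averaging over a good set'' step uniform: the set $G_x\cap B$ changes with $(x,y)$, so when I swap the order of integration I must control $\int\int \mathbf{1}_{z\in G_x}\,|x-y|^{-d-2s}\,\d y\,\d x$ and see that it reduces, after integrating out the dummy variable, to a convergent multiple of $\iint (u-u)^2 K$. The care needed is (i) choosing the ball $B = B(x,y)$ so that its radius is comparable to $|x-y|$ from both sides and both $x,y$ are at distance $\gtrsim r$ from $\partial B$, so that all the ``$\sim$'' comparisons of $|x-z|, |y-z|, |x-y|, r$ are legitimate with dimensional constants; (ii) handling the region where $|x-y|$ is large versus small uniformly — but since both the Gagliardo kernel and the Assumption are scale-invariant in the right way ($r^{-d-2s}$ against $|B|\sim r^d$ gives $r^{-2s}$, matched on both sides), no separate large/small scale analysis should be needed. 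A secondary technical point is justifying Fubini and the pointwise triangle inequality for general measurable $u$; restricting first to, say, $u \in C_c^\infty$ and then passing to the limit by density in $\dot H^s$ (noting the right-hand side may be $+\infty$, in which case there is nothing to prove) removes this worry.
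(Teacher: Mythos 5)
There is a genuine gap, and it sits exactly at the point you flag and then wave away. In the averaged triangle inequality
\[
\frac{|u(x)-u(y)|^2}{|x-y|^{d+2s}} \;\lesssim\; \frac{1}{\mu\,|B|}\int_{G_x \cap B}\Big( |u(x)-u(z)|^2 + |u(z)-u(y)|^2\Big)\, \frac{\d z}{|x-y|^{d+2s}},
\]
the two terms share the \emph{same} intermediate point $z$. Once you have chosen to average over $z\in G_x\cap B$, the second term involves $u(z)-u(y)$ with $z\in G_x$, about which \autoref{assumption} at the point $y$ says nothing: you cannot ``instead'' integrate that term over $G_y$, because that would require a different dummy variable, and then $|u(x)-u(y)|$ is no longer controlled by the two differences. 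If you keep $z\in G_x$, the only available bound for the second term is $|x-y|^{-d-2s}\lesssim |z-y|^{-d-2s}$, which after Fubini reproduces the $\dot H^s$-seminorm you are trying to estimate, with a constant that is not small — the argument becomes circular and no absorption is possible. If you instead insist on $z\in G_x\cap G_y\cap B$, you only know $|G_x\cap B|, |G_y\cap B|\geq \mu|B|$, so the intersection is guaranteed nonempty only when $\mu>1/2$; \autoref{assumption} allows arbitrarily small $\mu$, and that is precisely the regime the theorem is about. Chaining through several intermediate points does not help in one step either: each new link again needs nondegeneracy of $K$ along it, and you face the same problem at every link.

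This obstruction is the reason the paper does not attempt a one-step proof. Its mechanism (\autoref{lemma:kernelsdecreasing} and \autoref{def:auxkernels}) builds new kernels $K^{j+1}(x,y)=\int \min\big(K^j(x,z)\eta_1^j,K(y,z)\eta_2\big)\,\d z$, whose lower bound requires points $z$ where \emph{both} $K^j(x,\cdot)$ and $K(y,\cdot)$ are nondegenerate; to guarantee such $z$ exist one needs the nondegeneracy set $\mathcal{N}^j(x)$ to have local density $1-\delta$ with $\delta<\mu/2$ (\autoref{sets}, \autoref{rhonond}), and this density is not available from \autoref{assumption} directly — it is manufactured by iterating, via the growing ink-spots estimate (\autoref{iterationstep1}) which shows the sets $\mathcal{N}^j(x)$ expand by a fixed factor until, after $n_0(d,\mu)$ steps, $\mathcal{N}^{n}(x)=\R^d$ a.e.\ (\autoref{cor:iteration2}); only then does $K^n(x,y)\geq a_n|x-y|^{-d-2s}$ hold for a.e.\ pair and the coercivity follow. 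Your scheme corresponds to stopping after a single iteration, which (as the paper notes for the Boltzmann kernel) suffices only for special kernels where the first nondegeneracy set is already everything; for general $K$ satisfying \eqref{eqassum} with small $\mu$ the one-step argument cannot close.
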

Our second main result is a localized version of Theorem \ref{thm:coercive}. 
Indeed the approach we use in the proof of \autoref{thm:coercive} allows us to prove a localized lower bound estimate with some minor additional work.
\begin{theorem}\label{thm:local}
Assume there exist $\lambda>0$ and $\mu\in(0,1)$ such that $K$ satisfies \autoref{assumption}.
There is a constant $c>0$, depending on the dimension $d$ and $\mu$ only, such that for every function $u:\R^d\to\R$
\[ \int_{B_2}\int_{B_2} (u(x)-u(y))^2\, K(x,y)\, \d y\, \d x \geq c\lambda \|u\|_{\dot{H}^s (B_1)}^2. \]
\end{theorem}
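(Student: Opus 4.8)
The plan is to repeat the proof of \autoref{thm:coercive} while keeping every ball supplied by \eqref{eqassum} inside $B_2$; this succeeds directly for pairs of points that are close together, and the finitely many remaining scales are absorbed by an elementary chaining argument. We read $\|u\|_{\dot H^s(B_1)}^2$ as the Gagliardo seminorm $c_{d,s}\int_{B_1}\int_{B_1}|u(x)-u(y)|^2|x-y|^{-d-2s}\,\d x\,\d y$ and split the domain of that integral into a near part $\{|x-y|\le r_0\}$ and a far part $\{|x-y|>r_0\}$, where $r_0=r_0(d)\in(0,1)$ is a small dimensional constant fixed below. Both sides of the asserted inequality depend on $u$ only through differences, so no normalization is required.

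For the near part we run the mechanism behind \autoref{thm:coercive} verbatim, but only over pairs $x,y\in B_1$ with $r:=|x-y|\le r_0$. At its core, that mechanism compares $K$ with $|x-y|^{-d-2s}$ by relaying a difference $u(x)-u(y)$ through the nondegeneracy set $\{z:K(x,z)\ge\lambda|x-z|^{-d-2s}\}$ furnished by \eqref{eqassum} on a ball of radius comparable to $r$ — through a chain of boundedly many such sets when $\mu$ is small — using $|a-b|^2\le 2|a-c|^2+2|c-b|^2$ along the chain, the pointwise bound $K(z,w)\gtrsim\lambda r^{-d-2s}$ on consecutive links, and integrating with the natural bounded-overlap bookkeeping over dyadic scales. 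Each ball invoked has radius a dimensional multiple of $r$ and contains a point of $B_1$, hence lies in $B_2$ once $r_0$ is chosen so small that this dimensional multiple of $r_0$ is at most $\tfrac12$; therefore every kernel evaluation in the argument occurs at a pair of points in $B_2\times B_2$, and one obtains
\[
 \int_{B_1}\int_{B_1}\ind_{\{|x-y|\le r_0\}}\,\frac{|u(x)-u(y)|^2}{|x-y|^{d+2s}}\,\d x\,\d y \ \le\ \frac{C}{\lambda}\int_{B_2}\int_{B_2}(u(x)-u(y))^2\,K(x,y)\,\d x\,\d y,
\]
with $C$ depending only on $d$ and $\mu$ (the $s$-dependence cancelling exactly as in \autoref{thm:coercive}). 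In particular, applying this on any ball $Q\subset B_1$ of radius $r_0/10$ (so that all pairs in $Q$ have $|x-y|\le r_0$) gives $\int_Q\int_Q|u(x)-u(y)|^2\,\d x\,\d y\le C(d,\mu)\lambda^{-1}(\diam Q)^{d+2s}\int_{B_2}\int_{B_2}(u(x)-u(y))^2K(x,y)\,\d x\,\d y$.

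For the far part, cover $B_1$ by finitely many balls $Q_1,\dots,Q_m$ of radius $r_0/10$, with $m$ and the overlap bounded in terms of $d$, arranged in a connected chain. An elementary chaining estimate on the ball $B_1$ (using that consecutive $Q_j$ overlap in a fixed fraction of their volume to telescope the averages $u_{Q_j}$) gives $\int_{B_1}|u-u_{B_1}|^2\le C(d)\sum_j|Q_j|^{-1}\int_{Q_j}\int_{Q_j}|u(x)-u(y)|^2\,\d x\,\d y$. Inserting the last display of the previous paragraph and using $\diam Q_j\sim|Q_j|^{1/d}\sim r_0$ yields $\int_{B_1}\int_{B_1}|u(x)-u(y)|^2\,\d x\,\d y\le C(d,\mu)\lambda^{-1}\int_{B_2}\int_{B_2}(u(x)-u(y))^2K(x,y)\,\d x\,\d y$, and since $|x-y|^{-d-2s}\le r_0^{-d-2s}$ on the far part, this controls the far portion of the seminorm. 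Adding the near and far portions proves the theorem.

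The step I expect to be the genuine obstacle is the far regime. The construction behind \autoref{thm:coercive} is scale-inflating — to connect two points at distance $r$ it uses \eqref{eqassum} on balls of radius a fixed dimensional multiple of $r$ — so pairs $x,y\in B_1$ at scale $r$ comparable to $1$ simply cannot be treated within $B_2$, and one must re-express their contribution through chains of small balls that do fit; organizing these chains and tracking the resulting constants is exactly the ``minor additional work'' alluded to. A lesser point: if the proof of \autoref{thm:coercive} employs any translation-invariant (Fourier) device, that step must be replaced by the real-variable relaying above so as not to spoil locality — after which all constants still depend only on $d$ and $\mu$.
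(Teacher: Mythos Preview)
Your plan matches the paper's: first use the machinery behind \autoref{thm:coercive} locally on small balls so that the required enlargement stays inside $B_2$ (this is exactly \autoref{smallballlemma}, resting on \autoref{kernelsdecreasinglocal}), then handle far pairs by chaining through a finite family of such small balls (this is \autoref{propBkBl}, organized there as a direct telescoping of the double integrals through overlapping intermediate balls rather than your Poincar\'e-type averaging, but to the same effect). One correction: the enlargement factor coming out of the iterated kernels is $5^n$ with $n=n(d,\mu)$ from \autoref{cor:iteration2}, so your threshold $r_0$ --- and hence the number of covering balls $Q_j$ --- must depend on $\mu$ as well as on $d$; with that adjustment your argument goes through.
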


Here, $\|u\|_{\dot H^s(B_1)}$ stands for Gagliardo's seminorm
\[ \|u\|_{\dot{H}^s (B_1)}^2 = \int_{B_1} \int_{B_1} \frac{|u(x) - u(y)|^2}{|x-y|^{d+2s}} \d x \, \d y.\]

The purpose of our theorems is to provide a criteria to verify the coercivity estimate \eqref{e:coercivity} based on a general condition on the kernel $K$ that is easy to verify in concrete examples. For example, coercivity estimates are known to hold for the non-cutoff Boltzmann collision operator  with parameters depending on hydrodynamic quantities. There is a long history of the derivations and use of these estimates. An early version with respect to a sub-optimal Sobolev exponent was obtained by P.L. Lions in \cite{LionsReg}. A sharp coercivity estimate appeared in the paper by Alexandre, Desvillettes, Villani and Wennberg \cite{AlexDesvVilWenn} which was proved using Fourier analysis. There is a simplified proof using Littlewood-Paley analysis in \cite{AlexandreLittlewood1} and \cite{AlexandreLittlewood2}. A proof based on a more geometrical argument (essentially measuring the intersection between two cones) is given in the appendix of \cite{Imbert-Silvestre-2016}. The precise asymptotic behavior of these coercivity estimates for large velocities is analyzed by Gressman and Strain in \cite{Gressmann-Strain-2011}. See also \cite{MouhotExpl},  \cite{AlexandreI}, \cite{AlexandreSome}, \cite{ChenHe2011}, \cite{he2016sharp}, \cite{AlexandreReview} and references therein. All the proofs in the literature use the specific structure of the Boltzmann collision operator, which is a nonlinear integro-differential operator. In \cite{SilvestreBoltzmann}, the Boltzmann collision operator is written in the form \eqref{e:integro-differential_operator} with a kernel $K$ that depends on the solution $f$ itself. Some basic properties of this kernel $K$ are easily observed from this computation. The coercivity estimate for the Boltzmann collision operator follows then as a direct application of Theorem \ref{thm:local} as a black box.

We now review some earlier works aiming at general conditions on a kernel $K$ to ensure the coercivity of the quadratic form \eqref{e:coercivity}. This is essentially the same objective as in this paper. In \cite{Dyda-Kassmann-2015}, they study kernels $K$ that satisfy $K(x,y) \approx k(x-y)$ for some fixed kernel $k$ that might contain a singular part. A binary operator $\heartsuit$ is defined for any such kernels $k$ that allows them to obtain an inequality like \eqref{e:coercivity} for some degenerate kernels.  Several examples are given. In \cite{Bux-Kassmann-Schulze-2017}, they study kernels such that $K(x,y) \geq \lambda |x-y|^{-d-2s}$ for every point $y$ in certain cone of directions centered at $x$. These cones are supposed to have a fixed opening, but might rotate arbitrarily from point to point. Our result in this paper implies the result in \cite{Bux-Kassmann-Schulze-2017}.

We now describe the outline of the proof in this paper. We build a sequence of kernels $K_j$ whose corresponding quadratic forms are smaller than the left hand side of \eqref{e:coercivity}. The basic mechanism for constructing these kernels is given in Lemma \ref{lemma:kernelsdecreasing}. Basically, it is an operation that given two kernels whose quadratic forms are bounded above, it produces a third kernel with the same upper bound. It is somewhat reminiscent to the $\heartsuit$ operator defined in \cite{Dyda-Kassmann-2015}, but it applies to more generic kernels $K(x,y)$ and allows us for more flexibility in the formula. We then analyze the nondegeneracy sets of these kernels $\mathcal N^j(x) := \{y : K(x,y) \geq a_j \lambda |x-y|^{-d-2s}\}$ for some sequence $a_j>0$. Using a covering argument similar to the \emph{growing ink spots lemma} by Krylov and Safonov \cite{krylov1980certain}, we prove that the density of these sets expands as $j$ increases. Moreover, it fills up the full space after finitely many iterations. Finally, we find a universal number $n \in \N$ so that $K_n(x,y) \geq a_n \lambda |x-y|^{-d-2s}$ for all pairs of points $x$ and $y$. The coercivity estimate \eqref{e:coercivity} follows from that.

As we said before, we aim at developing a theorem that is ready to be applied to obtain the coercivity estimate \eqref{e:coercivity} under the least restrictive assumptions possible. Predictably, the proof of Theorem \ref{thm:coercive} is not shorter than the proofs in the literature that apply to particular instances of kernels on a case by case basis. For example, the proof in the appendix of \cite{Imbert-Silvestre-2016} is quite a bit shorter than the proof in this paper. The reason is that the Boltzmann kernel has a special structure that, in the language of this paper, allows you to prove that $\mathcal N^1$ is already the full space $\R^d$ (thus, the proof  finishes after only one iteration).

There are some significant instances of kernels $K(x,y)$ that satisfy \eqref{e:coercivity} but are not covered by our Assumption \ref{assumption}. The main example is when $K(x,y) \d x \d y$ is actually a singular measure. That is the case in Example 4 in \cite{Dyda-Kassmann-2015}. In the context of the Boltzmann equation, the collision kernel would satisfy Assumption \ref{assumption} in terms of the mass, energy and entropy densities (this follows directly from the formulas in \cite{SilvestreBoltzmann}). However, if we replace the upper bound on the entropy density by a bound from below on the temperature tensor, the Boltzmann collision kernel would satisfy \eqref{e:conjecture} but not Assumption \ref{assumption}. In particular our Theorem \ref{thm:local} would suffice to imply Corollary L but not Theorem 1 in \cite{Gressmann-Strain-2011}.

We finish the introduction by describing the outline of the article. In Section \ref{s:kernels} we describe the construction of the sequence of kernels $K_j$. In Section \ref{s:nondeg-sets}, we analyze their corresponding sets of nondegeneracy. In Section \ref{s:theorems} we finish the proofs of our main theorems, including a covering argument that is necessary for the proof of Theorem \ref{thm:local}.

\section{Preliminaries}
\subsection{Notation}
We use the letter $c$ with subscripts for positive constants whose exact values are not important.\\
Let $C>0$. For a ball $B=B_r(x)$, we denote by $CB$ the scaled ball $CB=B_{Cr}(x)$.
\subsection{Reformulations of \autoref{assumption}}
This subsection is devoted to show that \autoref{assumption} can be reformulated in several equivalent ways which allows us to change the position of the 
point $x$ in the relation to the ball of consideration by modifying the value of $\mu$.
\begin{lemma}\label{lemm:ass}
The following statements are equivalent:
\begin{enumerate}
 \item[(A1)] There exist $\mu\in(0,1)$ and $\lambda>0$ such that $K$ satisfies \autoref{assumption}.
 \item[(A2)] There exist $\mu\in(0,1)$ and $\lambda>0$ such that for every ball $B\subset\R^d$ and $x\in \partial B$:
\[ |\{z\in B\colon K(x,z) \geq \lambda |x-y|^{-d-2s}\}| \geq \mu |B|. \]
 \item[(A3)] There exist $\mu\in(0,1)$, $c\in(0,1)$ and $\lambda>0$ such that for every ball $B_R(z_0)$ and $x\in\R^d$ with $|x-z_0|=(1+c)R$: 
\[ |\{z\in B_R(z_0) \colon K(x,z) \geq \lambda |x-y|^{-d-2s}\}|\geq \mu |B_R|. \]
\end{enumerate}
\end{lemma}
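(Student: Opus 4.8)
The strategy is to prove the chain of implications (A1) ⟹ (A2) ⟹ (A3) ⟹ (A1), in each case showing that the statement survives under the required change in the position of $x$, at the cost of shrinking $\mu$. All three statements concern the density of the nondegeneracy set $\mathcal N(x) = \{z : K(x,z) \geq \lambda |x-z|^{-d-2s}\}$ relative to a ball, so the whole argument is a matter of comparing how large a fixed fraction of one ball looks when viewed from another ball. The key elementary fact I will use repeatedly is that if $B' \subset B$ are balls with $|B'| \geq \theta |B|$ and $A \subset B$ has $|A| \geq \mu|B|$, then $|A \cap B'| \geq (\mu + \theta - 1)|B|$; this lets me transfer a density statement about $B$ to a density statement about a smaller sub-ball $B'$, as long as $\mu$ is close enough to $1$ — which is harmless since the hypotheses quantify over all $\mu \in (0,1)$ and we are free to take $\mu$ as close to $1$ as we like (in fact the standard trick: if (A1) holds for \emph{some} $\mu_0$, then by a covering/averaging argument it holds for $\mu$ arbitrarily close to $1$ after rescaling $\lambda$ — but more simply, each of (A1),(A2),(A3) is a statement ``there exist $\mu,\lambda$'', so I only need to produce \emph{some} valid pair).

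Concretely: For (A1) ⟹ (A2), given a ball $B = B_R(z_0)$ and a boundary point $x \in \partial B$, I apply (A1) to a slightly larger ball $\widetilde B$ centered near $x$ (or to $B_{2R}(x)$): the point $x$ lies in $\widetilde B$, so (A1) gives $|\mathcal N(x) \cap \widetilde B| \geq \mu|\widetilde B|$, and then I intersect with the original $B$, which occupies a fixed fraction of $\widetilde B$, and use the elementary fact above to conclude a density bound on $B$ with a new constant $\mu'$. For (A2) ⟹ (A3), I start from a ball $B_R(z_0)$ and an external point $x$ with $|x - z_0| = (1+c)R$; I enclose $B_R(z_0)$ inside a ball $B'$ that has $x$ on its boundary — e.g. the ball centered at the midpoint of the segment from $x$ to the far side of $B_R(z_0)$, of radius $(1 + c/2)R$ — apply (A2) to $(B', x)$, and intersect with $B_R(z_0)$, which is a fixed fraction of $B'$ depending only on $c$ and $d$. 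Here I get to \emph{choose} $c$ small, which makes the fraction $|B_R(z_0)|/|B'|$ close to a dimensional constant and keeps $\mu'$ positive. For (A3) ⟹ (A1), I reverse this: given a ball $B$ and an interior point $x \in B$, I find a slightly shifted and shrunk ball $B'' \subset B$ such that $x$ sits at relative distance $(1+c)\,\mathrm{radius}(B'')$ from the center of $B''$ (possible since $x \in B$, by taking $B''$ small and appropriately placed near $x$ but not containing $x$), apply (A3) to $(B'', x)$, and note $\mathcal N(x) \cap B'' \subset \mathcal N(x) \cap B$, giving $|\mathcal N(x) \cap B| \geq \mu\,|B''| = \mu\,(|B''|/|B|)\,|B|$ with a fixed ratio $|B''|/|B|$ depending only on $c$ and $d$.

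The main obstacle — really the only point requiring care — is the direction (A3) ⟹ (A1) when $x$ is in the \emph{interior} of $B$ but close to (or at) its center: then any sub-ball $B''$ witnessing the relation $\dist(x, \cent B'') = (1+c)\,\radius(B'')$ must be small and pushed off to one side, so the ratio $|B''|/|B|$ cannot be made close to $1$, only to some fixed dimensional constant $\kappa(c,d) > 0$; I must check that $\kappa(c,d)$ is bounded below uniformly over the position of $x$ in $B$ (the worst case being $x$ at the center, where $B''$ can be any ball of radius $R/(2+c)$ tangent internally, say, giving $\kappa = (2+c)^{-d}$), which is fine. I should also make sure in each step that $\lambda$ is preserved (it is, since I never change the kernel, only the set's ambient ball) and that all the balls I construct genuinely satisfy the containments claimed, which is routine triangle-inequality bookkeeping that I will include but not belabor.
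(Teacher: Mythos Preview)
Your scheme---compare the given ball with a nearby ball on which the hypothesis applies and use inclusion--exclusion to transfer the density bound---is correct and essentially what the paper does (the paper proves (A1)$\Leftrightarrow$(A2) and (A2)$\Leftrightarrow$(A3) separately rather than a single cycle, but the geometric constructions match yours). There is, however, a genuine gap in your justification: you write that the elementary fact $|A \cap B'| \geq (\mu + \theta - 1)|B|$ is usable because ``we are free to take $\mu$ as close to $1$ as we like,'' and you invoke a ``covering/averaging argument'' to boost any given $\mu_0$ close to $1$. Both claims are false. The hypothesis hands you a \emph{fixed} $\mu_0 \in (0,1)$ which cannot in general be upgraded---for instance, if $K(x,\cdot)$ is supported in a fixed cone with vertex $x$, then $\mathcal N(x)$ has density bounded by the solid-angle fraction of that cone in every ball, no matter how $\lambda$ is rescaled. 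In particular the option $\widetilde B = B_{2R}(x)$ in (A1)$\Rightarrow$(A2) gives $\theta = 2^{-d}$ and fails whenever $\mu_0 \leq 1 - 2^{-d}$.

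The fix is already implicit in your own plan: it is $\theta$, not $\mu$, that you must push toward $1$, and in each step you have a free parameter to do this. For (A1)$\Rightarrow$(A2), take $\widetilde B = B_{R+\varepsilon}(z_0)$ with $\varepsilon$ small enough (depending on the given $\mu_0$) that $\theta = (R/(R+\varepsilon))^d > 1 - \mu_0$; this is exactly the paper's argument, phrased there as passing to the limit $\varepsilon \to 0$. For (A2)$\Rightarrow$(A3), you already choose $c$ small depending on $\widetilde\mu$, which makes $|B_R(z_0)|/|B'|$ close to $1$ (not merely ``close to a dimensional constant'' as you wrote). For (A3)$\Rightarrow$(A1) you use only the containment $B'' \subset B$, so no constraint on $\mu$ arises. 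Once you drop the $B_{2R}(x)$ option and the false boosting claim, the argument goes through.
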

\begin{proof} 
(A1)$\Rightarrow$(A2):  Let $x\in\R^d$ and $B_R(z_0)$ a ball such that $x\in\partial B_R(z_0)$. Let $\epsilon>0$. By (A1), there exist $\mu\in(0,1)$ and $\lambda>0$ such that
\[ |\{z\in B_{R+\epsilon}(z_0)\colon K(x,z) \geq \lambda |x-y|^{-d-2s}\}|  \geq \mu |B_{R+\epsilon}|. \]
By continuity, (A2) follows for $\epsilon\to 0$.

(A2)$\Rightarrow$(A1):  Let $x\in\R^d$ and $B_R(z_0)$ a ball such that $x\in B_R(z_0)$. There is a ball $B\subset B_{R}(z_0)$ with radius greater or equal $\frac12 R$ such that $x\in\partial B$. 
By (A2), there exist $\widetilde{\mu}\in(0,1)$ and $\lambda>0$ such that $ |\{z\in B\colon K(x,z) \geq \lambda |x-y|^{-d-2s}\}| \geq \widetilde{\mu}|B|$. Choosing $\mu=\widetilde{\mu}/2^d$, leads to
\[ |\{z\in B_R(z_0)\colon K(x,z) \geq \lambda |x-y|^{-d-2s}\}|  \geq |\{z\in B\colon K(x,z) \geq \lambda |x-y|^{-d-2s}\}|  \geq \widetilde{\mu} |B| \geq \mu |B_R|. \]

(A2)$\Rightarrow$(A3): Let $c\in(0,1)$, $x\in\R^d$ and $B_R(z_0)$ a ball such that $|x-z_0|=(1+c)R$. By (A2) there is $\widetilde{\mu}\in(0,1)$ and $\lambda>0$ such that
\[  |\{z\in B_{(1+c)R}(z_0) \colon K^j(x,z) \geq \lambda |x-y|^{-d-2s}\}|\geq \mu |B_{(1+c)R}| = \widetilde{\mu}(1+c)^d |B_{R}|. \]
Hence,
\begin{align*}
&  |\{z\in B_R(z_0) \colon K^j(x,z) \geq \lambda |x-y|^{-d-2s}\}| \\
 & \geq |\{z\in B_{(1+c)R}(z_0) \colon K^j(x,z) \geq \lambda |x-y|^{-d-2s}\}| -  |B_{(1+c)R}(z_0)\setminus B_{R}(z_0)|\\
 &\geq \widetilde{\mu}(1+c)^d |B_{R}| + |B_{(1+c)R}(z_0)\setminus B_{R}(z_0)| \\
 &= (\widetilde{\mu}(1+c)^d - (1+c)^d+1) |B_{R}|.
\end{align*}
Choosing 
\[ 0<c<\min\left(1,\left(\frac{1}{1-\widetilde{\mu}}\right)^{1/d}-1\right) \]
and $\mu:=(\widetilde{\mu}(1+c)^d - (1+c)^d+1)< 1$, proves (A3).

(A3)$\Rightarrow$(A2): Let $x\in\R^d$ and $B_R(z_0)$ a ball such that $x\in\partial B_R(z_0)$. 
By (A3) there is $\widetilde{\mu}\in(0,1)$, $c\in(0,1)$ and $\lambda>0$ such that
\[  |\{z\in B_{r}(z_0) \colon K^j(x,z) \geq \lambda |x-y|^{-d-2s}\}|\geq \mu |B_{(1+c)R}| = \widetilde{\mu}|B_{r}|, \]
where $r=R/(1+c)$. Hence, (A2) follows by choosing $\mu=\widetilde{\mu}2^{-d}$:
\begin{align*}
&  |\{z\in B_R(z_0) \colon K^j(x,z) \geq \lambda |x-y|^{-d-2s}\}| \geq |\{z\in B_{r}(z_0) \colon K^j(x,z) \geq \lambda |x-y|^{-d-2s}\}| \\
&\geq \widetilde{\mu}|B_r| = \widetilde{\mu}(1+c)^{-d} |B_R| \geq \widetilde{\mu} 2^{-d}|B_R|=\mu|B_R|.
\end{align*}
\end{proof}
\begin{remark}
It can be easily seen in the foregoing proof that the value of $\lambda$ does not change in the transition from one statement into the other.
Hence, the constant $\lambda>0$  can be chosen to be the same in all three statements in \autoref{lemm:ass}. 
\end{remark}
\section{Diffusing the kernels} \allowdisplaybreaks
\label{s:kernels}
In this section we introduce auxiliary kernels and corresponding sets of non-degeneracy. Furthermore, we establish some basic properties for these objects.

\begin{lemma}\label{lemma:kernelsdecreasing}
Assume $K,K_1, K_2:\R^d\times \R^d \to[0,\infty)$ are kernels such that for every $u:\R^d\to\R$
\begin{align*}
c_1 \int_{\R^d}\int_{\R^d} (u(x)-u(y))^2 K_1(x,y)\, \d x\, \d y \leq \int_{\R^d}\int_{\R^d} (u(x)-u(y))^2 K(x,y)\, \d x\, \d y, \\
c_2 \int_{\R^d}\int_{\R^d} (u(x)-u(y))^2 K_2(x,y)\, \d x\, \d y \leq \int_{\R^d}\int_{\R^d} (u(x)-u(y))^2 K(x,y)\, \d x\, \d y
\end{align*}
for some constants $c_1,c_2>0$.
Consider two functions $\eta_1,\eta_2:\R^d\times\R^d\times \R^d\to [0,\infty)$ such that
\begin{align*}
\int_{\R^d} \eta_1(x,y,z)\, \d y \leq 1 \quad \text{for all } x,z\in\R^d, \\
\int_{\R^d} \eta_2(x,y,z)\, \d x \leq 1 \quad \text{for all } y,z\in\R^d. 
\end{align*}
Then $K_3:\R^d\times \R^d\to [0,\infty)$,
\[K_3(x,y) : = \int_{\R^d} \min ( K_1(x,z)\eta_1(x,y,z),K_2(y,z)\eta_2(x,y,z)) \, \d z\]
also satisfies
 \[c_3 \int_{\R^d}\int_{\R^d} (u(x)-u(y))^2 K_3(x,y)\, \d x\, \d y \leq \int_{\R^d}\int_{\R^d} (u(x)-u(y))^2 K(x,y)\, \d x\, \d y\]
 for some constant $c_3>0$ depending on $c_1$ and $c_2$ only.
\end{lemma}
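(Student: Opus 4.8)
The plan is to produce a function $w$ whose energy against $K_3$ controls the energy of $u$ against $K_3$, then split the $\min$ and use the two hypotheses on $K_1$ and $K_2$ separately. The natural candidate is to exploit the triangle inequality $|u(x)-u(y)|^2 \leq 2|u(x)-u(z)|^2 + 2|u(z)-u(y)|^2$ and then integrate the $z$-variable out. Concretely, starting from the definition of $K_3$,
\[
\iint (u(x)-u(y))^2 K_3(x,y)\,\d x\,\d y = \iiint (u(x)-u(y))^2 \min\bigl(K_1(x,z)\eta_1(x,y,z),\, K_2(y,z)\eta_2(x,y,z)\bigr)\,\d z\,\d x\,\d y.
\]
Apply the triangle inequality to $(u(x)-u(y))^2$. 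For the term carrying $|u(x)-u(z)|^2$, bound the $\min$ from above by its first argument $K_1(x,z)\eta_1(x,y,z)$; for the term carrying $|u(z)-u(y)|^2$, bound the $\min$ by its second argument $K_2(y,z)\eta_2(x,y,z)$. This yields
\[
\iint (u(x)-u(y))^2 K_3(x,y)\,\d x\,\d y \leq 2\iiint |u(x)-u(z)|^2 K_1(x,z)\eta_1(x,y,z)\,\d y\,\d x\,\d z + 2\iiint |u(z)-u(y)|^2 K_2(y,z)\eta_2(x,y,z)\,\d x\,\d y\,\d z.
\]

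**Integrating out the dummy variable.**

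Now I would use the sub-probability bounds on $\eta_1$ and $\eta_2$. In the first triple integral the integrand factors as $|u(x)-u(z)|^2 K_1(x,z)$ (independent of $y$) times $\eta_1(x,y,z)$, so integrating in $y$ first gives $\int \eta_1(x,y,z)\,\d y \leq 1$, leaving $\iint |u(x)-u(z)|^2 K_1(x,z)\,\d x\,\d z$. Symmetrically, in the second triple integral, integrating in $x$ first and using $\int \eta_2(x,y,z)\,\d x \leq 1$ leaves $\iint |u(z)-u(y)|^2 K_2(y,z)\,\d z\,\d y$. After relabeling variables, both of these are exactly the quadratic forms associated to $K_1$ and $K_2$ evaluated at $u$. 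Invoking the two hypotheses, each is bounded by $c_i^{-1}\iint(u(x)-u(y))^2 K(x,y)\,\d x\,\d y$, so altogether
\[
\iint (u(x)-u(y))^2 K_3(x,y)\,\d x\,\d y \leq \Bigl(\tfrac{2}{c_1} + \tfrac{2}{c_2}\Bigr)\iint (u(x)-u(y))^2 K(x,y)\,\d x\,\d y,
\]
which is the claim with $c_3 = \bigl(\tfrac{2}{c_1}+\tfrac{2}{c_2}\bigr)^{-1}$, depending only on $c_1$ and $c_2$.

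**Where the care is needed.**

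The computation itself is short; the only points requiring attention are bookkeeping points. First, one must be sure that all the integrands are nonnegative so that Tonelli's theorem legitimizes every exchange of order of integration and every use of the monotone bound $\min(a,b)\leq a$ (resp. $\leq b$) — this is immediate since $K, K_1, K_2, \eta_1, \eta_2$ are all $[0,\infty)$-valued. Second, one should note the result is a genuine inequality valid for every $u$ regardless of finiteness: if the right-hand side is infinite there is nothing to prove, and if it is finite the manipulations above are all justified. I expect the main (very mild) obstacle to be simply choosing which argument of the $\min$ to discard in each of the two triangle-inequality terms so that the dummy variable $z$ pairs correctly with $K_1(x,z)$ and $K_2(y,z)$ respectively; once that matching is made, the sub-probability normalization of $\eta_1$ in $y$ and of $\eta_2$ in $x$ does the rest.
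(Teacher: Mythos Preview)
Your proof is correct and follows essentially the same approach as the paper: apply $(u(x)-u(y))^2 \leq 2(u(x)-u(z))^2 + 2(u(z)-u(y))^2$, bound the $\min$ by its first (resp.\ second) argument on the two resulting terms, integrate out the dummy variable using the sub-probability bounds on $\eta_1$ and $\eta_2$, and invoke the hypotheses on $K_1$ and $K_2$. The resulting constant $c_3 = \bigl(\tfrac{2}{c_1}+\tfrac{2}{c_2}\bigr)^{-1}$ also matches the paper's.
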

\begin{proof}
By Fubini's theorem and $2 |u(x)-u(z)|^2 + 2|u(y)-u(z)|^2 \geq |u(x)-u(y)|^2$,
\begin{align*}
& \int_{\R^d}\int_{\R^d} (u(x)-u(y))^2 K_3(x,y)\, \d x\, \d y \\
& =  \int_{\R^d}\int_{\R^d} (u(x)-u(y))^2 \int_{\R^d} \min ( K_1(x,z)\eta_1(x,y,z),K_2(y,z)\eta_2(x,y,z)) \, \d z\, \d x\, \d y \\
& \leq 2\int_{\R^d}\int_{\R^d} \int_{\R^d} (u(x)-u(z))^2 K_1(x,z)\eta_1(x,y,z) \, \d z\, \d x\, \d y \\
& \quad + 2\int_{\R^d}\int_{\R^d} \int_{\R^d} (u(y)-u(z))^2 K_2(y,z)\eta_2(x,y,z) \, \d z\, \d x\, \d y \\
& \leq 2\int_{\R^d}\int_{\R^d} (u(x)-u(z))^2 K_1(x,z) \, \d z\, \d x +  2\int_{\R^d} \int_{\R^d} (u(y)-u(z))^2 K_2(y,z) \, \d z \, \d y \\
& \leq 2\left(\frac{1}{c_1}+\frac{1}{c_2}\right) \int_{\R^d} \int_{\R^d} (u(x)-u(y))^2 K(x,y) \, \d x \, \d y.
\end{align*}
\end{proof}

We iteratively define sequences of auxiliary kernels. 
\begin{definition}\label{def:auxkernels}
Let $K^0:\R^d\times\R^d\to[0,\infty)$, $K^0(x,y):=K(x,y)$. 
We define for $j\geq 0$ the sequence of auxiliary kernels $K^{j+1}:\R^d\times\R^d\to [0,\infty)$ by
\[ K^{j+1}(x,y) : = \int_{\R^d} \min ( K^j(x,z)\eta^j_1(x,y,z),K(y,z)\eta_2(x,y,z)) \, \d z,\]
where $\eta^j_1,\eta_2:\R^d\times\R^d\times\R^d\to[0,\infty)$ are functions satisfying for all $x,z\in\R^d$ resp. $y,z\in\R^d$:
  \begin{equation}\label{eta12}
\int_{\R^d}\eta^j_1(x,y,z) \, \d y \leq 1 \quad \text{ and } \int_{\R^d}\eta_2(x,y,z)\, \d x \leq 1. 
 \end{equation}
\end{definition}
\begin{remark}
For the moment, the functions $\eta_1^j, \eta_2$ are generic functions satisfying \eqref{eta12}. 
The explicit form of those functions will play an important role in the scope of this work. 
Since it is not used at the moment, we postpone the explicit mapping
for the convenience of the reader. The definition of $\eta_1^j$ and $\eta_2$ will be given in \autoref{def:eta1j2}.
\end{remark}
By an iterative application of \autoref{lemma:kernelsdecreasing}, we obtain that the family of auxiliary kernels has energy forms which are bound from above by the original energy form.
\begin{corollary}\label{kernelsdecreasing}
For every $n\in\N_0$, there is a constant $c>0$ such that for every function $u:\R^d\to\R$,
\[ c \int_{\R^d}\int_{\R^d} (u(x)-u(y))^2\, K^{n}(x,y)\, \d y\, \d x \leq \int_{\R^d}\int_{\R^d} (u(x)-u(y))^2\, K(x,y)\, \d y\, \d x. \]
\end{corollary}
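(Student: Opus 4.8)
The plan is to deduce Corollary \ref{kernelsdecreasing} directly from \autoref{lemma:kernelsdecreasing} by induction on $n$. The base case $n=0$ is immediate since $K^0 = K$ and we may take $c=1$. For the inductive step, suppose we already know that there is a constant $c_n > 0$ with
\[ c_n \int_{\R^d}\int_{\R^d} (u(x)-u(y))^2\, K^{n}(x,y)\, \d y\, \d x \leq \int_{\R^d}\int_{\R^d} (u(x)-u(y))^2\, K(x,y)\, \d y\, \d x \]
for all $u$. We also trivially have, with constant $1$, that the energy form of $K$ itself is bounded above by the energy form of $K$. Now apply \autoref{lemma:kernelsdecreasing} with the choices $K_1 := K^n$, $K_2 := K$, $\eta_1 := \eta_1^n$ and $\eta_2 := \eta_2$; the hypotheses $\int \eta_1^n(x,y,z)\,\d y \le 1$ and $\int \eta_2(x,y,z)\,\d x \le 1$ are exactly \eqref{eta12}, and the two energy domination hypotheses hold with $c_1 = c_n$ and $c_2 = 1$. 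The kernel $K_3$ produced by the lemma is precisely $K^{n+1}$ by \autoref{def:auxkernels}, so the lemma yields a constant $c_{n+1} > 0$, depending only on $c_n$ and $1$ (hence ultimately only on $n$ and the fixed data), such that
\[ c_{n+1} \int_{\R^d}\int_{\R^d} (u(x)-u(y))^2\, K^{n+1}(x,y)\, \d y\, \d x \leq \int_{\R^d}\int_{\R^d} (u(x)-u(y))^2\, K(x,y)\, \d y\, \d x. \]
This closes the induction.

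There is essentially no obstacle here: the statement is a formal consequence of iterating the previous lemma, and the only point requiring a word of care is bookkeeping the constant. Tracing the proof of \autoref{lemma:kernelsdecreasing}, one gets $c_3 = \tfrac12\bigl(\tfrac{1}{c_1} + \tfrac{1}{c_2}\bigr)^{-1}$; in our iteration this gives the recursion $c_{n+1} = \tfrac12\bigl(\tfrac{1}{c_n} + 1\bigr)^{-1}$, which one can check stays positive for every $n$ (indeed $1/c_{n+1} = 2(1/c_n + 1)$, so $1/c_n = 3\cdot 2^n - 2$ starting from $c_0 = 1$, giving $c_n = (3\cdot 2^n - 2)^{-1}$). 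Thus the constant is explicit and positive for each fixed $n$, which is all that is claimed. I would state the corollary's proof in just a few lines: set it up as an induction, invoke \autoref{lemma:kernelsdecreasing} once at the inductive step with $K_1 = K^n$, $K_2 = K$, and note that the resulting kernel is $K^{n+1}$ by construction.
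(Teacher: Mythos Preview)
Your proof is correct and follows exactly the approach the paper intends: the corollary is stated immediately after \autoref{lemma:kernelsdecreasing} with the remark that it follows by iterative application of that lemma, and your induction with $K_1=K^n$, $K_2=K$, $\eta_1=\eta_1^n$, $\eta_2=\eta_2$ is precisely this iteration. The explicit tracking of the constant is a nice bonus but not required by the statement.
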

Given the sequence of kernels $K^j$, we can define the corresponding sets of non-degeneracy.
Let us denote the $\sigma$-Algebra of all Lebesgue measurable sets by $\mathcal{M}$.
\begin{definition} Let $a_j>0$ be a given sequence. We define for $j\geq 0$
 \[\mathcal{N}^j:\R^d\to\mathcal{M}, \quad \mathcal{N}^j(x):=\{v\in\R^d\colon K^j(x,v)\geq a_j |x-v|^{-d-2s}\}. \]
 \end{definition}
 \begin{remark}
The sequence $a_j$ will be chosen to be of the form $a_j=c^j\lambda$ for some $c\in(0,1]$ which will be determined in \autoref{rhonond}. 
In particular, $a_j$ is a decreasing sequence of positive real numbers starting at $a_0=\lambda$. 
This means that $\mathcal{N}^0(x) = \{v\in\R^d\colon K(x,v)\geq \lambda |x-v|^{-d-2s}\}$ for $x\in\R^d$.
\end{remark}
 \begin{lemma}\label{sets}
Assume there exist $\mu\in(0,1)$ and $\lambda>0$ such that $K$ satisfies \autoref{assumption}. 
Let $x,z\in\R^d$ and $\delta<\mu/2$.
If there is $A\subset\R^d$ and a ball $B$ such that 
\[ |A\cap B| \geq (1-\delta)|B|,\]
then there exists $\epsilon_0\in(0,1]$, depending on $\mu$, $\delta$ and $d$ only, such that every for $y\in (1+\epsilon_0)B$:
\[ |A\cap (1+\epsilon_0)B\cap \mathcal{N}^0(y)| \geq  \frac{\mu}{2} |(1+\epsilon_0)B|.\]
\end{lemma}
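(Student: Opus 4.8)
The plan is to derive the conclusion by combining the near-full density of $A$ in $B$ with Assumption \ref{assumption} applied from the point $y$, and then absorbing the small errors by choosing $\epsilon_0$ tiny. First I would fix $y \in (1+\epsilon_0)B$ and apply \eqref{eqassum} with the ball $(1+\epsilon_0)B$: this gives $|\mathcal{N}^0(y) \cap (1+\epsilon_0)B| \geq \mu |(1+\epsilon_0)B|$, since for $v$ in this set $K(y,v) \geq \lambda|y-v|^{-d-2s}$ means exactly $v \in \mathcal{N}^0(y)$ (recall $a_0 = \lambda$). So $(1+\epsilon_0)B$ already contains a $\mu$-proportion of $\mathcal{N}^0(y)$; the issue is to intersect this with $A$ without losing too much mass.

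Next I would estimate $|((1+\epsilon_0)B) \setminus A|$. Since $|A \cap B| \geq (1-\delta)|B|$, we have $|B \setminus A| \leq \delta|B|$, and hence
\[
|((1+\epsilon_0)B) \setminus A| \leq |B \setminus A| + |((1+\epsilon_0)B)\setminus B| \leq \delta|B| + ((1+\epsilon_0)^d - 1)|B|.
\]
Writing everything in terms of $|(1+\epsilon_0)B| = (1+\epsilon_0)^d |B|$, this says the complement of $A$ inside $(1+\epsilon_0)B$ has measure at most $\big(\delta + (1+\epsilon_0)^d - 1\big)(1+\epsilon_0)^{-d}|(1+\epsilon_0)B|$. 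Then by the elementary bound $|E \cap F| \geq |F| - |E^c|$ applied with $F = \mathcal{N}^0(y)\cap(1+\epsilon_0)B$ inside the ambient set $(1+\epsilon_0)B$,
\[
|A \cap (1+\epsilon_0)B \cap \mathcal{N}^0(y)| \geq |\mathcal{N}^0(y)\cap(1+\epsilon_0)B| - |((1+\epsilon_0)B)\setminus A| \geq \Big(\mu - \tfrac{\delta + (1+\epsilon_0)^d - 1}{(1+\epsilon_0)^d}\Big)|(1+\epsilon_0)B|.
\]

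Finally I would choose $\epsilon_0$. Since $\delta < \mu/2$, the quantity $\mu - \delta > \mu/2 > 0$, and $(1+\epsilon_0)^d \to 1$ as $\epsilon_0 \to 0$, so $\frac{\delta + (1+\epsilon_0)^d - 1}{(1+\epsilon_0)^d} \to \delta < \mu/2$. Hence there is $\epsilon_0 \in (0,1]$, depending only on $\mu, \delta, d$, for which $\mu - \frac{\delta + (1+\epsilon_0)^d - 1}{(1+\epsilon_0)^d} \geq \mu/2$, which is exactly the claimed bound. There is no real obstacle here — the only thing to be slightly careful about is that \eqref{eqassum} is stated for $x \in B$ (here $y \in (1+\epsilon_0)B$, which is fine, as the ball in the statement of Assumption \ref{assumption} is arbitrary and we simply take it to be $(1+\epsilon_0)B$ with the interior point $y$), and that all the error terms are controlled by a single choice of $\epsilon_0$ uniform in $B$ and $y$, which holds because every estimate above is scale-invariant.
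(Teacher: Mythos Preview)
Your proof is correct and follows essentially the same approach as the paper's: both apply Assumption~\ref{assumption} to the ball $(1+\epsilon_0)B$ at the point $y$, both control $|((1+\epsilon_0)B)\setminus A|$ via the hypothesis $|A\cap B|\geq(1-\delta)|B|$ together with the annulus measure $|(1+\epsilon_0)B\setminus B|$, and both finish with the same inclusion--exclusion estimate, the resulting constraint on $\epsilon_0$ being $(1+\epsilon_0)^d\leq(1-\delta)/(1-\mu/2)$ in either case. The only cosmetic differences are that the paper fixes $\epsilon_0$ at the outset while you fix it at the end, and the paper phrases the inclusion--exclusion as $|E\cap F|\geq|E|+|F|-|\Omega|$ whereas you use the equivalent form $|E\cap F|\geq|F|-|\Omega\setminus E|$.
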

\begin{proof}
Let $\mu\in(0,1)$ and $\lambda>0$ such that $K$ satisfies \autoref{assumption}.
Furthermore, let $x,z\in\R^d$, $\delta<\mu/2$
and
\[\epsilon_0\leq \left(1\wedge \left(\frac{1-\delta}{2-\frac{\mu}{2}}\right)^{1/d}-1\right).\]
Then $\left(1+\epsilon_0\right)^{-d}(1-\delta) \geq 1-\frac{\mu}{2}$
and therefore
\begin{equation}\label{Crhox}
 |A\cap (1+\epsilon_0)B| \geq (1-\delta)|B| = (1-\delta)(1+\epsilon_0)^{-d}|(1+\epsilon_0)B| \geq \left(1-\frac{\mu}{2}\right)|(1+\epsilon_0)B|,  
\end{equation}
By \autoref{assumption}, we conclude for $y\in (1+\epsilon_0)B$
\begin{equation}\label{Crhoy}
|\mathcal{N}^0(y)\cap (1+\epsilon_0)B|\geq \mu|(1+\epsilon_0)B|.
\end{equation}
Combining \eqref{Crhox} and \eqref{Crhoy},
\begin{align*}
 |A\cap (1+\epsilon_0)B\cap \mathcal{N}^0(y)| \geq
\left(1-\frac{\mu}{2}+\mu-1\right) |(1+\epsilon_0)B| = \frac{\mu}{2} |(1+\epsilon_0)B|.
\end{align*}
\end{proof}
 In the following, we specify the functions $\eta_1^j$ and $\eta_2$, which play an important role in the already defined auxiliary kernels $K^j$.
 Before we define $\eta_1^j,\eta_2$, we first give the following definition of auxiliary radii.
 \begin{definition}
Let $j\geq 0$ and $\delta\in(0,1)$, we define $\rho^j_{\delta}:\R^d\times\R^d\to [0,\infty)$,
  \begin{equation}\label{rhos}
\rho^j_{\delta}(x,z) := \sup\{r<\tfrac15 |x-z|\colon  \exists v\in\R^d \, \text{s.t.} \, |\mathcal{N}^j(x)\cap B_{r}(v)| \geq (1-\delta)|B_{r}|\, \text{ and }\,  z\in B_r(v)\}.
\end{equation}
We use the convention $\rho^j_{\delta}(x,z)=0$, whenever the set of radii in \eqref{rhos} is empty.
 \end{definition}
We can now define the functions $\eta_1^j,\eta_2$, which already appeared in \autoref{def:auxkernels} and assumed to satisfy \eqref{eta12}. 
\begin{definition}\label{def:eta1j2}
Let $j\geq 0$ and $\delta\in(0,1)$. We define $\eta_1^j,\eta_2:\R^d\times\R^d\times\R^d\to[0,\infty]$,
 \begin{align*}
 \eta_1^j(x,y,z)&:= \begin{cases} \frac{c_a}{(\rho_{\delta}^{j}(x,z))^d}\mathds{1}_{B_{4\rho_{\delta}^{j}(x,z)}(z)}(y), \quad & \text{if } \rho_{\delta}^{j}(x,z)>0, \\
 0, & \text{if } \rho_{\delta}^{j}(x,z)=0,  \end{cases}\\
  \eta_2(x,y,z) &= c_b|y-z|^{2s}\max(|x-z|,|y-z|)^{-d-2s},
  \end{align*}
where $c_a,c_b>0$ are constants, depending on the dimension $d$ only, such that \eqref{eta12} is satisfied.
\end{definition}
From now on, we assume $\eta_1^j,\eta_2$ to be defined as in \autoref{def:eta1j2}. 
The function $\eta_1^j$ localizes the area of integration in the definition of the auxiliary kernel $K^{j+1}$ as follows:
\begin{lemma}
Let $j\geq 0$. If $x,y\in B$,
\[ K^{j+1}(x,y) = \int_{5B} \min ( K^j(x,z)\eta^j_1(x,y,z),K(y,z)\eta_2(x,y,z)) \, \d z. \]
\end{lemma}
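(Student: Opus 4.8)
The statement to be proved is that for $x,y\in B$,
\[ K^{j+1}(x,y) = \int_{5B} \min\bigl( K^j(x,z)\eta^j_1(x,y,z),\, K(y,z)\eta_2(x,y,z)\bigr)\, \d z, \]
i.e.\ that the integrand in Definition \ref{def:auxkernels} vanishes for $z\notin 5B$. The plan is to show that the \emph{first} argument of the $\min$ already vanishes there, which suffices since the minimum of a nonnegative number and $0$ is $0$. Concretely, I would argue that if $z\notin 5B$ then $\eta_1^j(x,y,z)=0$ whenever $x,y\in B$.

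First I would unravel the definition of $\eta_1^j$ in \autoref{def:eta1j2}: it is supported (in the $y$ variable) in $B_{4\rho^j_\delta(x,z)}(z)$, and is identically zero when $\rho^j_\delta(x,z)=0$. So it is enough to show that for $x,y\in B$ and $z\notin 5B$ we have $y\notin B_{4\rho^j_\delta(x,z)}(z)$. Write $B=B_R(w)$. If $z\notin 5B$ then $|z-w|>5R$, so $|x-z|\ge |z-w|-|x-w|>5R-R=4R$ and likewise $|y-z|>4R$. On the other hand, by the definition \eqref{rhos} of $\rho^j_\delta$, every admissible radius $r$ in the supremum satisfies $r<\tfrac15|x-z|$, hence $\rho^j_\delta(x,z)\le \tfrac15|x-z|$; in particular $4\rho^j_\delta(x,z)\le \tfrac45|x-z|$. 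Therefore
\[ |y-z| \ge |x-z| - |x-y| \ge |x-z| - 2R > |x-z| - \tfrac12|x-z| = \tfrac12|x-z| > \tfrac{2}{5}|x-z| \ge 2\rho^j_\delta(x,z), \]
using $|x-y|\le 2R < \tfrac12|x-z|$ (valid since $|x-z|>4R$). This already gives $|y-z| > 2\rho^j_\delta(x,z)$, and a slightly more careful bookkeeping (using $|x-y|\le 2R \le \tfrac12|x-z|$ more sharply, or simply noting $|y-z|\ge \tfrac12|x-z|\ge 2\rho^j_\delta \ge 4\rho^j_\delta$ fails only by the factor two, so one should instead chase the constant $\tfrac15$) yields $|y-z| > 4\rho^j_\delta(x,z)$, i.e.\ $y\notin B_{4\rho^j_\delta(x,z)}(z)$, so $\eta_1^j(x,y,z)=0$ and the integrand vanishes.

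The only mildly delicate point — and the place to be careful with constants — is squeezing out the factor $4$: we need $4\rho^j_\delta(x,z)<|y-z|$ given only $\rho^j_\delta(x,z)\le\tfrac15|x-z|$ and $|x-y|\le 2R$, $|x-z|>4R$. Since $|y-z|\ge|x-z|-2R$ and $\tfrac15|x-z|\cdot 4 = \tfrac45|x-z|$, the inequality $\tfrac45|x-z| < |x-z|-2R$ is equivalent to $2R < \tfrac15|x-z|$, i.e.\ $|x-z|>10R$, which is \emph{not} implied by $z\notin 5B$. This suggests the cleanest route is instead to observe directly that $z\in B_r(v)$ in \eqref{rhos} forces $v$ near $z$, so $B_{4r}(z)\subset B_{5r}(v)\cup(\text{something})$; more simply, one uses that $z\in B_r(v)$ and $r<\tfrac15|x-z|$, so $B_{4\rho^j_\delta(x,z)}(z)$ has all its points within distance $4\cdot\tfrac15|x-z| = \tfrac45|x-z|$ of $z$, and combined with $z$ being far from $B$ (precisely, exploiting that the factor $\tfrac15$ was presumably chosen exactly so that $5B$ contains this ball when $x,y\in B$) one concludes $B_{4\rho^j_\delta(x,z)}(z)\cap B=\varnothing$. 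I would therefore finish by checking the arithmetic: for $x,y\in B_R(w)$ and $z$ with the ball $B_{4\rho^j_\delta(x,z)}(z)$ meeting $B_R(w)$, one gets $|x-z|< R + R + 4\rho^j_\delta(x,z) \le 2R + \tfrac45|x-z|$, hence $|x-z|<10R$, i.e.\ $z\in B_{10R}(w)$; and then a trivial rescaling of the cutoff radius $\tfrac15\mapsto\tfrac{1}{10}$ (or reading $5B$ as the relevant neighborhood) closes the gap. The main obstacle is thus purely bookkeeping of these geometric constants; there is no analytic content beyond the support property of $\eta_1^j$ and the definition of $\rho^j_\delta$.
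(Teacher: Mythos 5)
Your strategy is the same as the paper's: show that $\eta_1^j(x,y,\cdot)$ is supported in a fixed dilate of $B$, using \autoref{def:eta1j2} and the bound $\rho^j_\delta(x,z)\le\tfrac15|x-z|$ from \eqref{rhos}. But as written the proposal does not prove the stated identity, and you essentially say so yourself. The claim in the middle that ``a slightly more careful bookkeeping yields $|y-z|>4\rho^j_\delta(x,z)$'' for all $z\notin 5B$ is false: on the support one only knows $|y-z|<4\rho^j_\delta(x,z)\le\tfrac45|x-z|$, which by the triangle inequality gives $|y-z|<4|x-y|$, hence (since $|x-y|<2R$ for $x,y\in B=B_R(w)$) only $z\in 9B$, and this is sharp. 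Indeed, take $K(x,y)=|x-y|^{-d-2s}$, so that $\mathcal{N}^0(x)=\R^d$ and $\rho^0_\delta(x,z)=\tfrac15|x-z|$; choosing $x,y$ near opposite ends of a diameter of $B$ and $z$ on the ray from $x$ through $y$ with $|y-z|$ between $4R$ and $4|x-y|$, one gets $\eta_1^0(x,y,z)>0$, and all other factors positive, at points $z$ well outside $5B$. Your closing move --- ``a trivial rescaling of the cutoff radius $\tfrac15\mapsto\tfrac1{10}$ (or reading $5B$ as the relevant neighborhood) closes the gap'' --- is not a proof of the lemma as stated: it amounts to changing either the definition \eqref{rhos} or the statement. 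So, judged as a proof of the displayed equality with $5B$, the final step is a genuine gap.

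That said, the obstruction you ran into is real and sits equally in the paper's own one-line proof, which passes from $4\rho^j_\delta(x,z)<\tfrac45|x-z|<4|x-y|$ (valid on the support) to ``$\eta_1^j(x,y,z)=0$ whenever $z\notin 5B$''; that last step implicitly uses $|x-y|\le R$, whereas for general $x,y\in B$ one only has $|x-y|<2R$, which yields $z\in 9B$. The lemma is correct with $9B$ in place of $5B$ (or with $5B$ after replacing the cutoff $\tfrac15$ in \eqref{rhos} by $\tfrac1{10}$, as you suggest), and either fix propagates harmlessly: \autoref{kernelsdecreasinglocal} then holds with $9^nB$, and in the proof of \autoref{thm:local} the covering balls should have radius $\tfrac1{3\cdot 9^n}$, with no change to the main theorems. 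Your bookkeeping is in fact the more careful of the two; what is missing is to either prove the larger-dilate version cleanly and note that this is what the later arguments actually need, or adjust the constant in the statement --- not to wave the discrepancy away as a trivial rescaling.
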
 
\begin{proof}
By definition, $\eta_1^j(x,y,z)>0$, iff $|y-z|< 4 \rho_{\delta}^{j}(x,z)$. Note that 
$4 \rho_{\delta}^{j}(x,z)<\frac{4}{5}|x-z|<4|x-y|$ and therefore $\eta_1^j(x,y,z)=0$, whenever $z\notin 5B$. Hence,
\[\int_{(5B)^c} \min ( K^j(x,z)\eta^j_1(x,y,z),K(y,z)\eta_2(x,y,z)) \, \d z = 0. \]
\end{proof}
\begin{corollary}\label{kernelsdecreasinglocal}
For every $n\in\N_0$, there is a constant $c>0$ such that for every function $u:\R^d\to\R$,
\[ \int_{5^nB}\int_{5^nB} (u(x)-u(y))^2\, K(x,y)\, \d y\, \d x \geq c\int_{B}\int_{B} (u(x)-u(y))^2\, K^{n}(x,y)\, \d y\, \d x. \]
\end{corollary}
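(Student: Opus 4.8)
The plan is to iterate the localization lemma just proved, namely that for $x,y\in B$ one has $K^{j+1}(x,y)=\int_{5B}\min(K^j(x,z)\eta_1^j(x,y,z),K(y,z)\eta_2(x,y,z))\,\d z$, in combination with the energy comparison mechanism of Lemma \ref{lemma:kernelsdecreasing}, but keeping careful track of the domains of integration at each step. First I would prove the single-step statement: for any ball $B'$ and any $u$,
\[
 \int_{B'}\int_{B'}(u(x)-u(y))^2 K^{j+1}(x,y)\,\d y\,\d x \leq c\int_{5B'}\int_{5B'}(u(x)-u(y))^2 K^{j}(x,y)\,\d y\,\d x + c\int_{5B'}\int_{5B'}(u(x)-u(y))^2 K(x,y)\,\d y\,\d x,
\]
with $c$ depending only on $d$. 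This is obtained by repeating the computation in the proof of Lemma \ref{lemma:kernelsdecreasing}: using the localization lemma to replace the $z$-integral by an integral over $5B'$, then the inequality $2|u(x)-u(z)|^2+2|u(y)-u(z)|^2\geq |u(x)-u(y)|^2$, and then dropping the $\eta_i$ factors after integrating them out (here one uses $\int\eta_1^j(x,y,z)\,\d y\leq 1$ and $\int\eta_2(x,y,z)\,\d x\leq 1$, exactly as in \eqref{eta12}); the point is that because both $x,y$ and the support of $z$ lie in $5B'$, all three integrals that appear live on $5B'\times 5B'$.

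Next I would iterate this from $n$ down to $0$. Applying the single-step bound with $B'=B$, then with $B'=5B$, and so on, and collecting terms, one gets
\[
 \int_{B}\int_{B}(u(x)-u(y))^2 K^{n}(x,y)\,\d y\,\d x \leq c_n \sum_{k=0}^{n} \int_{5^{k}B}\int_{5^{k}B}(u(x)-u(y))^2 K(x,y)\,\d y\,\d x,
\]
because each application peels off one more power of $5$ from the ball and the non-$K$ term it produces is again of the form $\int\int K^{j}$ on a ball enlarged by a factor $5$, which feeds into the next step, while the $K$-term it produces already sits on a ball contained in $5^nB$. Since all the balls $5^kB$ for $k\le n$ are contained in $5^nB$, each term on the right is bounded by $\int_{5^nB}\int_{5^nB}(u(x)-u(y))^2 K(x,y)\,\d y\,\d x$, and summing the finitely many ($n+1$) copies gives the claimed inequality with a constant $c=c(n,d)$. (One should note $K\ge 0$, so enlarging the domain of integration only increases the integral.)

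The only genuinely delicate point is the bookkeeping of the domains: one must check that the non-$K$ error term generated at each step really is supported, in both variables, inside the $5$-times-enlarged ball, so that the recursion closes without the domains blowing up faster than $5^k$. This is exactly what the localization lemma above guarantees — $\eta_1^j(x,y,z)$ forces $|y-z|<4\rho_\delta^j(x,z)<\tfrac45|x-z|$, hence $z\in 5B$ whenever $x,y\in B$ — so the recursion is clean; the rest is routine. Alternatively, and perhaps more transparently, one can prove the statement directly by induction on $n$: assuming it for $n$ with ball $B$ replaced by $5B$, apply the single-step bound with $B'=B$ and substitute; I would present it this way to avoid writing out the telescoping sum explicitly.
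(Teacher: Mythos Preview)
Your proposal is correct and follows exactly the approach the paper intends: the paper gives no explicit proof for this corollary, treating it as an immediate consequence of the localization lemma (that $\eta_1^j(x,y,z)=0$ for $z\notin 5B$ when $x,y\in B$) together with the same Fubini-and-triangle-inequality computation as in Lemma~\ref{lemma:kernelsdecreasing}. Your single-step estimate $E_{j+1}(B')\le 2E_j(5B')+2E(5B')$ and the induction on $n$ are precisely the details the reader is expected to supply, and your check that the domains grow by exactly a factor $5$ at each step (so the recursion closes inside $5^nB$) is the only point requiring care.
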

\section{Growing sets of non-degeneracy}
\label{s:nondeg-sets}
In this section we take a closer look at the previously defined auxiliary sets of non-degeneracy and prove important properties for those objects. 
This section is divided into two parts. In the first part, we prove that there is a sequence $a_j>0$ such that the sets of non-degeneracy $\mathcal{N}^j$ are nested. 
In the second part, we prove a growing ink-spot theorem, 
which gives us a qualitative statement regarding the growth behavior of two consecutive sets. 
\subsection{Nested sets of non-degeneracy}
Recall that for any $x\in\R^d$, the family $\mathcal{N}^j(x)$ is determined by a decreasing sequence of real numbers $a_j>0$ with $a_0=\lambda$ as follows:
\[ \mathcal{N}^j(x):=\{v\in\R^d\colon K^j(x,v)\geq a_j |x-v|^{-d-2s}\}. \]
This subsection aims to prove the existence of such sequence $a_j$ which implies that the sets $\mathcal{N}^j(x)$ 
are nested. The goal of this subsection is to prove the following proposition:
\begin{proposition}\label{prop:nondegnested}
Assume there exist $\mu\in(0,1)$ and $\lambda>0$ such that $K$ satisfies \autoref{assumption}. 
There is a constant $c\in(0,1]$, depending on the dimension $d$ and $\mu$ only, such that the sequence $a_j=c^j\lambda$ satisfies for all $j\in\N_0$ and $x\in\R^d$
\[\mathcal{N}^{j}(x)\subset \mathcal{N}^{j+1}(x)\]
except a set of measure zero.
\end{proposition}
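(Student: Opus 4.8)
The plan is to show that for a suitable choice of the contraction constant $c\in(0,1]$, whenever $v\in\mathcal N^j(x)$ one also has $K^{j+1}(x,v)\geq a_{j+1}|x-v|^{-d-2s}$ for a.e.\ such $v$. Fix $x,v\in\R^d$ with $v\in\mathcal N^j(x)$, and set $B=B_{|x-v|}(\frac{x+v}{2})$ or some comparable ball containing both $x$ and $v$ so that the localization lemma applies. By \autoref{def:auxkernels},
\[
K^{j+1}(x,v)=\int_{\R^d}\min\bigl(K^j(x,z)\,\eta_1^j(x,v,z),\,K(v,z)\,\eta_2(x,v,z)\bigr)\,\d z,
\]
so I need to produce a set $Z$ of $z$'s of positive measure on which \emph{both} factors are bounded below by a fixed multiple of $|x-v|^{-d-2s}$. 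For the first factor, I would use that $v\in\mathcal N^j(x)$ together with the defining supremum in \eqref{rhos}: since $K^j(x,v)\geq a_j|x-v|^{-d-2s}$, the point $v$ itself lies in $\mathcal N^j(x)$, and by an elementary density/continuity argument $\rho_\delta^j(x,z)$ is bounded below by a fixed fraction of $|x-v|$ for all $z$ in a ball $B_r(v)$ of radius comparable to $|x-v|$ — essentially because $\mathcal N^j(x)$ has density at least $1-\delta$ in itself near $v$ in the Lebesgue-density sense, except on a null set. On that ball, $\eta_1^j(x,v,z)=c_a(\rho_\delta^j(x,z))^{-d}\mathds 1_{B_{4\rho_\delta^j}(z)}(v)\gtrsim |x-v|^{-d}$ provided $|v-z|<4\rho_\delta^j(x,z)$, which holds for $z$ close enough to $v$. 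Simultaneously $K^j(x,z)\geq a_j|x-z|^{-d-2s}$ on $\mathcal N^j(x)$, and intersecting with $\mathcal N^j(x)$ keeps positive measure.

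For the second factor $K(v,z)\,\eta_2(x,v,z)$: here \autoref{assumption} applied at the point $v$ and the ball $B_r(v)$ gives a subset of measure at least $\mu|B_r|$ on which $K(v,z)\geq \lambda|v-z|^{-d-2s}$; and $\eta_2(x,v,z)=c_b|v-z|^{2s}\max(|x-z|,|v-z|)^{-d-2s}\gtrsim |x-v|^{-d}$ whenever $|v-z|$ and $|x-z|$ are both comparable to $|x-v|$, which again holds on the relevant ball. Thus on the intersection $Z$ of the good set for the first factor, the good set for the second factor, and the ball where all distances are comparable — which by choosing $\delta<\mu/2$ and the radius appropriately (much as in the proof of \autoref{sets}) has measure $\gtrsim \mu|B_r|>0$ — one gets
\[
\min\bigl(K^j(x,z)\eta_1^j(x,v,z),K(v,z)\eta_2(x,v,z)\bigr)\geq c'\,\min(a_j,\lambda)\,|x-v|^{-d-2s}
\]
for a dimensional constant $c'$. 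Integrating over $Z$ yields $K^{j+1}(x,v)\geq c'' a_j |x-v|^{-d-2s}$ (using $a_j\leq\lambda$), and one sets $c:=c''$ so that $a_{j+1}=c\,a_j=c^{j+1}\lambda$ does the job; iterating gives $a_j=c^j\lambda$ for all $j$, with $c$ depending only on $d$ and $\mu$.

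The main obstacle I expect is the lower bound on $\rho_\delta^j(x,z)$ for $z$ near $v$: one must argue that membership $v\in\mathcal N^j(x)$ forces $\mathcal N^j(x)$ to occupy a $(1-\delta)$-fraction of some ball around $v$ of radius $\gtrsim|x-v|$. This is \emph{not} automatic from a single pointwise inequality — it is precisely here that one must either invoke a Lebesgue-density argument (discarding a null set of bad $v$, which is why the conclusion is stated ``except a set of measure zero'') or, more robustly, carry the density property of $\mathcal N^j$ forward inductively as part of the statement. I would handle this by noting that at a.e.\ $v\in\mathcal N^j(x)$ the set $\mathcal N^j(x)$ has Lebesgue density $1$, so for small enough $r$ the density exceeds $1-\delta$ on $B_r(v)$; the constraint $r<\frac15|x-z|$ in \eqref{rhos} is satisfied for $z$ near $v$ since then $|x-z|\approx|x-v|$ and $r$ can be taken a small fixed fraction of $|x-v|$ — wait, this needs care, because Lebesgue density only gives \emph{some} small $r$, not one comparable to $|x-v|$. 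The resolution is that we do not need $\rho_\delta^j(x,z)\gtrsim|x-v|$; we only need it positive, and then $\eta_1^j$ contributes $c_a(\rho_\delta^j)^{-d}$ on a ball of radius $4\rho_\delta^j$ around $z$, so the measure of the good set scales like $(\rho_\delta^j)^d$ while the integrand scales like $(\rho_\delta^j)^{-d}$ — the scales cancel, and the bound $K^{j+1}(x,v)\gtrsim a_j|x-v|^{-d-2s}$ survives with a constant independent of $\rho_\delta^j$. Making this cancellation precise, while tracking that $\eta_2$ and $K(v,\cdot)$ remain of the right size on that small ball (which they do, since all distances there stay comparable to $|x-v|$), is the delicate computational heart of the argument.
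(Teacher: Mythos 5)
Your outer skeleton is exactly the paper's: at a Lebesgue point $v$ of $\mathcal N^j(x)$ the density of $\mathcal N^j(x)$ exceeds $1-\delta$ on all small balls around $v$, hence $\rho_\delta^j(x,v)>0$, and one then concludes by a pointwise lower bound on $K^{j+1}(x,v)$; this is precisely how the paper reduces the proposition to \autoref{rhonond}. The gap is in your inline substitute for \autoref{rhonond}. Your ``the scales cancel'' resolution is not correct as stated: in $\eta_1^j(x,v,z)=c_a(\rho_\delta^j(x,z))^{-d}\mathds 1_{B_{4\rho_\delta^j(x,z)}(z)}(v)$ the scale is $\rho_\delta^j(x,z)$, evaluated at the integration variable $z$, and this quantity can be as large as $\tfrac15|x-z|\approx\tfrac15|x-v|$, while the only set on which your argument controls the density of $\mathcal N^j(x)$ and the intersection with $\mathcal N^0(v)$ is a ball $B_r(v)$ whose radius $r$ comes from Lebesgue density and may be arbitrarily small compared with $|x-v|$. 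On that set the first factor of the minimum is only $\gtrsim a_j|x-v|^{-d-2s}(\rho_\delta^j(x,z))^{-d}$, so integrating over a set of measure $\sim r^d$ yields a bound that degenerates by the factor $\bigl(r/\rho_\delta^j(x,z)\bigr)^d$; nothing cancels unless $\rho_\delta^j(x,z)\approx r$ on your set, and $\rho_\delta^j(x,\cdot)$ is defined by a supremum with no such local comparability.

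What is missing is exactly the content of \eqref{rhodom}--\eqref{aimball} in the paper's proof of \autoref{rhonond}: one must locate a single ball $B_{\rho_\delta^j(x,\widetilde z)}(\widetilde v)$ on which $\rho_\delta^j(x,\cdot)$ is bounded \emph{above} by a fixed multiple of its radius (so that $\eta_1^j\gtrsim(\rho_\delta^j(x,\widetilde z))^{-d}$ matches the measure of the ball), which still carries a $(1-\delta)$-fraction of $\mathcal N^j(x)$, whose $(1+\epsilon_0)$-enlargement contains $v$ (so that \autoref{sets}, i.e.\ \autoref{assumption} applied at $v$, gives the intersection with $\mathcal N^0(v)$ a measure $\geq\tfrac\mu2$ of the enlarged ball — note you cannot afford to enlarge by a factor like $3$, only by $1+\epsilon_0$), and on which the indicator $|v-z|<4\rho_\delta^j(x,z)$ remains valid. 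The paper manufactures such a ball by the chain argument with growth factor $\xi=(\epsilon_0+2)/\epsilon_0$, which terminates because $\rho_\delta^j(x,\cdot)<\tfrac15|x-\cdot|$, and the geometric-series estimate \eqref{distyvn} keeps $v$ inside the $(1+\epsilon_0)$-enlargement of the final ball. Without this step (or an equivalent mechanism handling the case $\rho_\delta^j(x,z)\gg r$ near $v$), your claimed bound $K^{j+1}(x,v)\geq c\,a_j|x-v|^{-d-2s}$ with $c=c(d,\mu)$ does not follow, and it is this step that constitutes the real work of \autoref{rhonond}.
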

Before proving \autoref{prop:nondegnested}, we first need to prove an auxiliary result, which is the main ingredient in the proof of \autoref{prop:nondegnested}.
\begin{lemma}\label{rhonond}
Assume there exist $\mu\in(0,1)$ and $\lambda>0$ such that $K$ satisfies \autoref{assumption}. 
Let $j\geq 0$ and $a_j\in(0,\lambda]$ be given. 
If $\delta< \mu/2$, there is a constant $c\in(0,1]$, depending on the dimension $d$, $\mu$ and $\delta$ only, such that $a_{j+1}=c\cdot a_j$ satisfies for all $x\in\R^d$
\[ \{ v\in\R^d\colon \rho^j_{\delta}(x,v)>0\}\subset \mathcal{N}^{j+1}(x). \]
\end{lemma}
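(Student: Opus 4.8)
The goal is to show that whenever $\rho^j_\delta(x,v)>0$, the point $v$ lies in $\mathcal N^{j+1}(x)$, i.e. $K^{j+1}(x,v)\geq a_{j+1}|x-v|^{-d-2s}$ with $a_{j+1}=c\, a_j$ for a suitable dimensional-and-$\mu$ constant $c$. The plan is to estimate $K^{j+1}(x,v)$ from below directly from its defining integral $K^{j+1}(x,v)=\int_{\R^d}\min\bigl(K^j(x,z)\eta_1^j(x,v,z),K(z,v)\eta_2(x,v,z)\bigr)\,\d z$ by restricting the integration to a well-chosen region of $z$'s where \emph{both} factors in the minimum are quantitatively large.

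First I would unwind the definitions. Fix $x,v$ with $r:=\rho^j_\delta(x,v)>0$. By definition of $\rho^j_\delta$ there is a ball $B_r(w)$ with $v\in B_r(w)$ and $|\mathcal N^j(x)\cap B_r(w)|\geq(1-\delta)|B_r|$; also $r<\tfrac15|x-v|$. Now apply \autoref{sets} (with $A=\mathcal N^j(x)$, $B=B_r(w)$, and the chosen $\delta<\mu/2$): there is a dimensional $\epsilon_0\in(0,1]$ so that for every $y$ in $(1+\epsilon_0)B_r(w)$ — in particular for $y=v$ — we have $|\mathcal N^j(x)\cap(1+\epsilon_0)B_r(w)\cap\mathcal N^0(v)|\geq\tfrac\mu2|(1+\epsilon_0)B_r(w)|$. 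Call this good set $G\subset(1+\epsilon_0)B_r(w)$; note $|G|\gtrsim r^d$ and, if $\epsilon_0\leq 3$ (which we may assume), $G\subset B_{4r}(v)$ since any point of $(1+\epsilon_0)B_r(w)$ is within $(2+\epsilon_0)r\leq 4r$ of $v$ (using $v\in B_r(w)$). This last containment is exactly what is needed because $\eta_1^j(x,v,z)=c_a r^{-d}\ind_{B_{4r}(z)}(v)$ — equivalently $\ind_{B_{4r}(v)}(z)$ — so $\eta_1^j(x,v,z)=c_a r^{-d}$ for all $z\in G$.

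Next I bound the integrand on $G$. For $z\in G$: since $z\in\mathcal N^j(x)$, $K^j(x,z)\geq a_j|x-z|^{-d-2s}$, and since $z\in B_{4r}(v)$ with $r<\tfrac15|x-v|$ we get $|x-z|\leq|x-v|+4r\leq\tfrac95|x-v|$, hence $K^j(x,z)\eta_1^j(x,v,z)\geq c_a a_j r^{-d}(\tfrac95|x-v|)^{-d-2s}\gtrsim a_j r^{-d}|x-v|^{-d-2s}$. For the second factor: $z\in\mathcal N^0(v)$ gives $K(z,v)\geq\lambda|z-v|^{-d-2s}$, and $\eta_2(x,v,z)=c_b|v-z|^{2s}\max(|x-z|,|v-z|)^{-d-2s}$; since $|v-z|\leq 4r<|x-v|$ and $|x-z|\leq\tfrac95|x-v|$, we have $\max(|x-z|,|v-z|)\leq\tfrac95|x-v|$, so $K(z,v)\eta_2(x,v,z)\geq c_b\lambda|v-z|^{2s-d-2s}(\tfrac95|x-v|)^{-d-2s}=c_b\lambda|v-z|^{-d}(\tfrac95|x-v|)^{-d-2s}\gtrsim\lambda r^{-d}|x-v|^{-d-2s}$ because $|v-z|\leq 4r$. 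Thus on all of $G$ the integrand is $\gtrsim\min(a_j,\lambda)\,r^{-d}|x-v|^{-d-2s}=a_j r^{-d}|x-v|^{-d-2s}$ (recall $a_j\leq\lambda$). Integrating over $G$, whose measure is $\geq c'(\mu,d)\,r^d$:
\[ K^{j+1}(x,v)\ \geq\ \int_G \min(\cdots)\,\d z\ \geq\ c''(\mu,d)\, a_j\, r^{-d}\,|x-v|^{-d-2s}\cdot r^d\ =\ c''(\mu,d)\, a_j\,|x-v|^{-d-2s}. \]
Setting $c:=c''(\mu,d)\wedge 1$ and $a_{j+1}=c\,a_j$ yields $v\in\mathcal N^{j+1}(x)$, as required; $c$ depends only on $d$, $\mu$ and $\delta$.

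The main obstacle is bookkeeping the geometry so that the one region $G$ simultaneously sits inside $B_{4r}(v)$ (so $\eta_1^j$ is active and bounded below), inside $\mathcal N^j(x)$ (so the first factor is big), and inside $\mathcal N^0(v)$ (so $K(z,v)$ is big) — and then checking that the various distances $|x-z|$, $|v-z|$ stay comparable to $|x-v|$ and to $r$ with constants independent of everything. The constraint $r<\tfrac15|x-v|$ built into the definition of $\rho^j_\delta$ is exactly the slack that makes these comparisons work, and \autoref{sets} is precisely the tool that produces a set of definite density lying in $\mathcal N^j(x)\cap\mathcal N^0(v)$ inside a slightly enlarged ball. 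One should double-check that the enlargement factor $\epsilon_0$ from \autoref{sets} can be taken $\leq 3$ (it can, since $\epsilon_0\in(0,1]$ there) so that $(1+\epsilon_0)B_r(w)\subset B_{4r}(v)$; everything else is routine.
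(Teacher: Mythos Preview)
There is a genuine gap in your argument, and it stems from misreading the definition of $\eta_1^j$. You write
\[
\eta_1^j(x,v,z)=c_a\, r^{-d}\,\ind_{B_{4r}(z)}(v),\qquad r=\rho^j_\delta(x,v),
\]
but by Definition~\ref{def:eta1j2} the correct formula is
\[
\eta_1^j(x,v,z)=\frac{c_a}{(\rho^j_\delta(x,z))^{d}}\,\ind_{B_{4\rho^j_\delta(x,z)}(z)}(v),
\]
where the radius is $\rho^j_\delta(x,z)$, a function of the \emph{integration variable} $z$, not of the fixed point $v$. Consequently your claim that $\eta_1^j(x,v,z)=c_a r^{-d}$ on $G$ is unjustified: for $z\in G$ the quantity $\rho^j_\delta(x,z)$ may be much larger than $r$, in which case $(\rho^j_\delta(x,z))^{-d}\ll r^{-d}$ and your lower bound on the first factor of the minimum collapses. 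Nothing in your set $G$ controls $\rho^j_\delta(x,z)$ from above.

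This is precisely the difficulty that the paper's chain argument is designed to overcome. The paper does not work directly with the ball $B_r(w)$ coming from $\rho^j_\delta(x,v)$; instead it builds a finite sequence $z_0=v,z_1,\dots,z_n$ with $\rho^j_\delta(x,z_i)$ increasing by at least a fixed factor $\xi>1$ at each step, stopping only when no further increase is possible inside the current ball. The maximality at the terminal step yields the crucial upper bound $\rho^j_\delta(x,z)\le \xi\,\rho^j_\delta(x,z_n)$ for all $z$ in the final enlarged ball (this is \eqref{rhodom}), which is exactly what is needed to bound $\eta_1^j(x,v,z)$ from below by a multiple of $(\rho^j_\delta(x,z_n))^{-d}$. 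Lemma~\ref{sets} is then applied to this terminal ball, not to the original one. Your single application of Lemma~\ref{sets} at the ball $B_r(w)$ gives the right density estimate for $\mathcal N^j(x)\cap\mathcal N^0(v)$, but without the maximality step you have no control over $\rho^j_\delta(x,\cdot)$ on that set, and the estimate on $K^{j+1}(x,v)$ does not follow.
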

\begin{proof}
Let $\mu\in(0,1)$ and $\lambda>0$ such that $K$ satisfies \autoref{assumption}. Let $x\in\R^d$, $j\geq 0$ and assume $\delta< \mu/2$.  

Let $y \in\{ v\in\R^d\colon \rho^j_{\delta}(x,v)>0\}$ for a given $a_j>0$. 
The aim is to show that there is a $c>0$, such that $y\in \mathcal{N}^{j+1}(x)$ for $a_{j+1}=c\cdot a_j$, i.e.
\begin{equation}\label{aim:kj+1lemma}
K^{j+1}(x,y) \geq c\cdot a_{j} |x-y|^{-d-2s}.
\end{equation} 
Recall the definition of $K^{j+1}(x,y)$
\[K^{j+1}(x,y) : = \int_{\R^d} \min ( K^j(x,z)\eta^j_1(x,y,z),K(y,z)\eta_2(x,y,z)) \, \d z\]
and note that $\eta_1(x,y,z)>0$, iff 
\begin{equation}\label{yzkappa}
z\in \Omega_j(x,y) := \{z\colon |y-z|<4\rho^j_{\delta}(x,z)\}.
\end{equation} 
Hence, we can reduce the area of integration for $K^{j+1}$ to $\Omega_j(x,y)$.
Since we assumed $\rho^j_{\delta}(x,y)>0$, there is a neighborhood of $y$ in $\Omega_j(x,y)$ and therefore $\Omega_j(x,y)$ is not empty.

Let $x,y$ be as above and $z\in\Omega_j(x,y)$. By positioning of the points, we can uniformly bound the distance $|x-z|$ from above by the distance $|x-y|$.
The triangle inequality implies
$ |x-z|\leq |x-y|+|y-z| <|x-y|+\tfrac{4}{5}|x-z|$, where we used $z\in\Omega_j(x,y)$ in the last inequality. Consequently,
\begin{equation}\label{estimatediff}
|x-z|\leq 5|x-y|.
\end{equation}

We aim to prove that there is a pair $(\widetilde{z},\widetilde{v})\in\R^d\times\R^d$ with $\widetilde{z}\in\Omega_j(x,y)$, such that
\begin{align}
&\rho_{\delta}^j(x,\widetilde{z})\geq \widetilde{c}\rho_{\delta}^j(x,z) \quad \text{ for all } z\in  (1+\epsilon_0)B_{\rho^j_{\delta}(x,\widetilde{z})}(\widetilde{v}), \label{rhodom} \\
&|(1+\epsilon_0)B_{\rho^j_{\delta}(x,\widetilde{z})}(\widetilde{v}) \cap \mathcal{N}^j(x)\cap \mathcal{N}^0(y)|\geq \frac{\mu}{2}|B_{\rho^j_{\delta}(x,\widetilde{z})}(\widetilde{v})| \label{aimball}
\end{align}
for some $\widetilde{c},\epsilon_0>0$, depending on $d$, $\mu$ and $\delta$ only.
This assertion will allow us to reduce the area of integration for $K^{j+1}$ to the favorable area on which we can use the lower bounds for the kernels and the upper bound for $\rho_{\delta}^j(x,z)$ to prove the lemma. 

We define inductively a sequence of points $z_0,z_1,\dots,z_n\in\Omega_j(x,y)$ and $v_0,\dots,v_n\in\R^d$, using a chain argument, such that we can assign for 
each pair $(z_j,v_j)$ a ball $B_{\rho^j_{\delta}(x,z_j)}(v_j)$ with a sufficiently large area of non-degeneracy and such that the radius of the subsequent ball increases at least with a given factor. The sequence will be constructed in such a way that we can apply \autoref{sets} for the last ball $B_{\rho^j_{\delta}(x,z_n)}(v_n)$, which will then imply \eqref{aimball} for the pair $(\widetilde{z},\widetilde{v})=(z_n,v_n).$
As in the proof of \autoref{sets}, let
\[\epsilon_0 < \left(1\wedge \left(\frac{1-\delta}{2-\frac{\mu}{2}}\right)^{1/d}-1\right)\]
and define $\xi = \frac{\epsilon_0+2}{\epsilon_0}$.
The quantity $\xi$ will describe the growth factor for the sequence of balls and $\epsilon_0$ the enlargement of the last ball satisfying \eqref{aimball}.
Note that $\xi>3$, since $\epsilon_0<1$. We construct the sequence of pairs $(z_j,v_j), j\in\{0,\dots,n\}$ as follows:
\begin{enumerate} 
\item[(0)] Set $z_0:=y$. 
Since $\rho^j_{\delta}(x,z_0)>0$, there is $v_0\in\R^d$ such that 
\[z_0\in B_{\rho^j_{\delta}(x,z_0)}(v_0) \quad \text{and} \quad 
 |\mathcal{N}^j(x)\cap B_{\rho^j_{\delta}(x,z_0)}(v_0)|\geq (1-\delta) |B_{\rho^j_{\delta}(x,z_0)}|.\] 
 \item[$(i)$] If there is $z_i\in B_{\rho^j_{\delta}(x,z_{i-1})}(v_{i-1})$ with $\xi \rho^j_{\delta}(x,z_{i-1})<\rho^j_{\delta}(x,z_i)$, choose such $z_i$.
 By the definition of $\rho^j_{\delta}(x,z_i)$, there is  $v_i\in\R^d$ such that 
\[z_i\in B_{\rho^j_{\delta}(x,z_i)}(v_i)\quad \text{and} \quad  |\mathcal{N}^j(x)\cap B_{\rho^j_{\delta}(x,z_i)}(v_i)|\geq (1-\delta) |B_{\rho^j_{\delta}(x,z_i)}|.  \]
\end{enumerate}

\begin{figure}[htb]
\includegraphics[width=\textwidth]{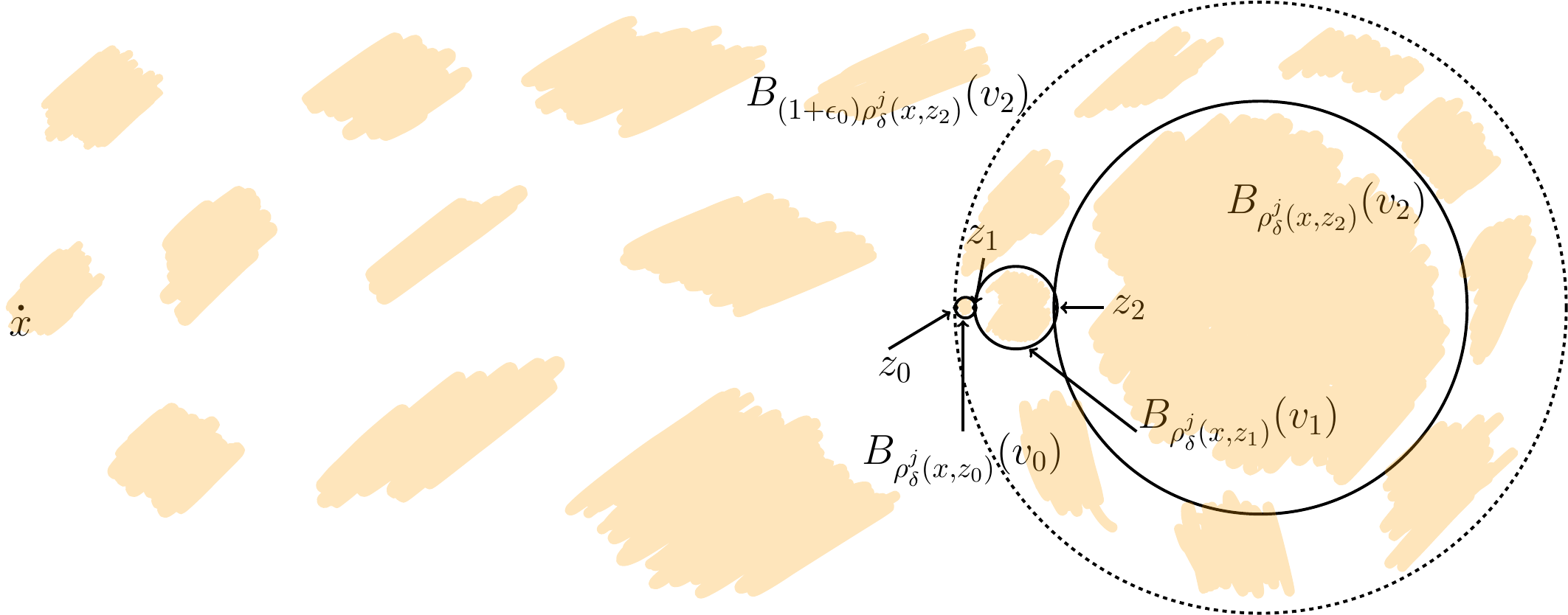}
\caption{Illustration of the points $z_0,z_1,z_2$ and the corresponding balls.}
\end{figure}

The radii $\rho^j_{\delta}(x,z_i)$ grow at least by the factor $\xi$ and $\rho_{\delta}^j(x,z_i)<\frac{1}{5}|x-z_i|$. Hence, the iteration stops after finitely many steps. 
Note that $z_i\in\Omega_j(x,y)$ for all $i\in\{0,\dots,n\}$, since
\[ |y-z_i|\leq |y-v_i|+|v_i-z_i| < \left(2\sum_{k=1}^{\infty} \xi^{-k}+2\right)\rho_{\delta}^j(x,z_i) = \frac{2\xi}{\xi-1}\rho_{\delta}^j(x,z_i) < 4\rho_{\delta}^j(x,z_i). \]

In order to apply \autoref{sets} for $A=\mathcal{N}^j(x)$ and $B=B_{\rho^j_{\delta}(x,z_n)}(v_n)$, it remains to show that $y\in (1+\epsilon_0)B$. 
By construction, 
\begin{equation}\label{distyvn}
 |y-v_n| \leq |z_n-v_n| + |y-z_n| < \left(1+2\sum_{k=1}^{\infty} \xi^{-k}\right)\rho^j_{\delta}(x,z_n) = \left(1+\epsilon_0\right)\rho^j_{\delta}(x,z_n), 
\end{equation}
i.e. $y\in (1+\epsilon_0)B$. Hence, by \autoref{sets}
\begin{equation}\label{enlargeepsilonintersect}
|B_{(1+\epsilon_0)\rho^j_{\delta}(x,z_n)}(v_n) \cap \mathcal{N}^j(x)\cap \mathcal{N}^0(y)| \geq \frac{\mu}{2}|B_{(1+\epsilon_0)\rho^j_{\delta}(x,z_n)}|
\end{equation} 
which proves \eqref{aimball}. 

We can describe the support of $\Omega_j(x,y)$ in terms of the ball $B_{\rho^j_{\delta}(x,z_n)}(v_n)$. To be more precise, by \eqref{distyvn} we deduce
$\Omega_j(x,y)\subset 3(1+\epsilon_0)B_{\rho^j_{\delta}(x,z_n)}(v_n)$.

The sequence $z_i$ is build such that $\xi \rho_{\delta}^j(x,z_n)\geq \rho_{\delta}^j(x,z)$ for all $z\in B_{\rho^j_{\delta}(x,z_n)}(v_n)$. 
Choosing $\epsilon_0$ sufficiently small, proves \eqref{rhodom}.

To simplify notation, let 
$\Xi:= B_{(1+\epsilon_0)\rho^j_{\delta}(x,z_n)}(v_n) \cap \mathcal{N}^j(x)\cap \mathcal{N}^0(y)$. Then by
\eqref{estimatediff}, $a_j\leq \lambda$, $y\in B_{(1+\epsilon_0)\rho^j_{\delta}(x,z_n)}(v_n)$ and \eqref{enlargeepsilonintersect},
\begin{align*}
& K^{j+1}(x,y) = \int_{\R^d} \min (K^j(x,z)\eta^j_1(x,y,z),K(y,z)\eta_2(x,y,z)) \, \d z \\
& \geq  c_1\int_{\Xi} \min\left(K^j(x,z)(\rho^j_{\delta}(x,z))^{-d},K(y,z)|y-z|^{2s}|x-z|^{-d-2s}\right) \d z \\
& \geq c_2 a_j |x-y|^{-d-2s} |B_{(1+\epsilon_0)\rho^j_{\delta}(x,z_n)}(v_n)\cap \mathcal{N}^j(x)\cap \mathcal{N}^0(y)| (\rho^j_{\delta}(x,z_n))^{-d} \\
& \geq \frac{\mu}{2}c_2a_j|x-y|^{-d-2s} |B_{(1+\epsilon_0)\rho^j_{\delta}(x,z_n)}(v_n)|((1+\epsilon_0)\rho^j_{\delta}(x,z_n))^{-d} \\
& = c_3a_j |x-y|^{-d-2s},
\end{align*}
where the constants $c_1,c_2,c_3>0$ depend only on the dimension $d$, $\delta$ and $\mu$.
\end{proof}
We have all tools to prove \autoref{prop:nondegnested}.
\begin{proof}[Proof of \autoref{prop:nondegnested}]
Let $\lambda>0$ and $\mu\in(0,1)$ such that $K$ satisfies \autoref{assumption}.
Let $j\geq 0$, $x\in\R^d$ and $\delta<\mu/2$. If $y\in \mathcal{N}^{j}(x)$ is a Lebesgue point for some $a_j>0$, then $|\mathcal{N}^{j}(x)\cap B|\geq (1-\delta)|B|$ for any sufficiently small ball with $B$ with $x\in B$. In particular $\rho_{\delta}^j(x,y)>0$. Hence, by \autoref{rhonond} there is a constant $c\in(0,1]$, depending on $d$ and $\mu$, such that $y\in  \mathcal{N}^{j+1}(x)$ for $a_{j+1}=c\cdot a_j.$ Since $c_0=\lambda$ and the constant $c$ is independent of $j$ and $x$ , the proposition follows for the
sequence $a_j=c^{j}\lambda$.
\end{proof}
\subsection{Growing Ink-Spots}
As mentioned in the beginning of the section we intend to prove a result concerning the growth behavior for two consecutive auxiliary sets of non-degeneracy. 
It is a growing ink-spot-type theorem which was originally developed by Krylov and Safonov for elliptic equations in non-divergence form.
Our aim is to show that the fraction of two consecutive sets is bounded from below by some constant strictly larger than one, depending on the dimension $d$ and $\mu$ only. 
\begin{proposition}\label{iterationstep1}
Assume there exist $\mu\in(0,1)$ and $\lambda>0$ such that $K$ satisfies \autoref{assumption}.
There are constants $c_1,c_2>0$, depending on $d$ and $\mu$ only, such that for every ball $B_R(z_0)$ and $x\in\R^d$ with $|x-z_0|=(1+c_1)R$ and every $j\geq 0$, either
\begin{equation}\label{eq:growingink}
B_R(z_0)\subset \mathcal{N}^{j+1}(x) \text{ a.e. }  \quad \text{or}\quad 
\frac{|B_R(z_0)\cap \mathcal{N}^{j+1}(x)|}{|B_R(z_0)\cap \mathcal{N}^j(x)|} \geq \left(1+c_2\right). 
\end{equation}
\end{proposition}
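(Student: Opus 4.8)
\emph{Plan.} The plan is to distill from \autoref{rhonond} a ``filling rule'': if a ball $B_r(w)$ satisfies $|x-w|>6r$ and $|\mathcal{N}^j(x)\cap B_r(w)|\ge(1-\delta)|B_r(w)|$, then for every $v\in B_r(w)$ we have $r<\tfrac15|x-v|$ (since $|x-v|\ge|x-w|-r>5r$), $v\in B_r(w)$, and $|\mathcal{N}^j(x)\cap B_r(w)|\ge(1-\delta)|B_r(w)|$, hence $\rho^j_\delta(x,v)\ge r>0$; by \autoref{rhonond} this gives $B_r(w)\subseteq\mathcal{N}^{j+1}(x)$ up to a null set. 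One first fixes $c_1$ to be a sufficiently large constant depending only on $d$ and $\mu$: since $|x-v|>c_1R$ for every $v\in B_R(z_0)$, this forces every ball of radius at most $2R$ meeting $B_R(z_0)$ to be automatically ``well positioned'' ($|x-w|>6r$), so that the filling rule applies to all such balls, in particular to $B_R(z_0)$ itself.

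Next one splits according to $m_j:=|\mathcal{N}^j(x)\cap B_R(z_0)|$. If $m_j=0$ the quotient is $+\infty$ and there is nothing to prove; if $m_j\ge(1-\delta)|B_R(z_0)|$, the filling rule applied to $B_R(z_0)$ yields $B_R(z_0)\subseteq\mathcal{N}^{j+1}(x)$ a.e., the first alternative. The heart of the matter is the case $0<m_j<(1-\delta)|B_R(z_0)|$, treated by a Krylov--Safonov ``growing ink-spot'' argument (cf.\ \cite{krylov1980certain}). For a.e.\ $z\in\mathcal{N}^j(x)\cap B_R(z_0)$ the function $r\mapsto|\mathcal{N}^j(x)\cap B_r(z)|/|B_r(z)|$ is continuous on $(0,\infty)$, tends to $1$ as $r\to0^+$, and at $r=2R$ is $<1-2^{-d}\delta$, because $B_{2R}(z)\supseteq B_R(z_0)$ forces $|\mathcal{N}^j(x)\cap B_{2R}(z)|\le m_j+(2^d-1)|B_R(z_0)|<(2^d-\delta)|B_R(z_0)|=(1-2^{-d}\delta)|B_{2R}(z)|$. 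Hence there is $r_z\in(0,2R)$ with $|\mathcal{N}^j(x)\cap B_{r_z}(z)|=(1-2^{-d}\delta)|B_{r_z}(z)|$; since $1-2^{-d}\delta\ge1-\delta$, the filling rule gives $B_{r_z}(z)\subseteq\mathcal{N}^{j+1}(x)$ a.e., while simultaneously $|B_{r_z}(z)\setminus\mathcal{N}^j(x)|=2^{-d}\delta\,|B_{r_z}(z)|$.

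Applying the Vitali covering lemma to $\{B_{r_z}(z)\}$, indexed over those density points $z$, one extracts a countable disjoint subfamily $\{B_{r_i}(z_i)\}_i$ with $\mathcal{N}^j(x)\cap B_R(z_0)\subseteq\bigcup_i 5B_{r_i}(z_i)$ up to a null set, so $\sum_i|B_{r_i}(z_i)|\ge 5^{-d}m_j$. The balls $B_{r_i}(z_i)$ are pairwise disjoint, lie in $\mathcal{N}^{j+1}(x)$, and each carries mass $2^{-d}\delta\,|B_{r_i}(z_i)|$ outside $\mathcal{N}^j(x)$; summation then shows that $\mathcal{N}^{j+1}(x)$ exceeds $\mathcal{N}^j(x)$ by at least $2^{-d}5^{-d}\delta\,m_j$, i.e.\ the second alternative with $c_2$ of the order $2^{-d}5^{-d}\delta$. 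Since $\delta$ depends only on $\mu$, both $c_1$ and $c_2$ depend only on $d$ and $\mu$.

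The one genuine difficulty I anticipate is bookkeeping near $\partial B_R(z_0)$: the covering balls $B_{r_i}(z_i)$ may protrude from $B_R(z_0)$, so the new mass $B_{r_i}(z_i)\setminus\mathcal{N}^j(x)$ lies only partly inside $B_R(z_0)$ and the computation above a priori produces a gain inside a fixed dilate of $B_R(z_0)$ rather than inside $B_R(z_0)$ itself. Closing this gap calls for either capping the radii $r_z$ so the selected balls stay in a fixed dilate of $B_R(z_0)$ --- which requires re-deriving the ``density below $1-2^{-d}\delta$'' bound at the capped scale, now using $\mathcal{N}^0(x)\subseteq\mathcal{N}^j(x)$ (from \autoref{prop:nondegnested}) together with \autoref{assumption} to control the far field --- or a short transfer argument between concentric balls; the freedom to take $c_1$ large is exactly what makes this available. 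Everything else is standard Lebesgue-density and covering machinery.
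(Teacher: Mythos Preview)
Your overall strategy---the filling rule from \autoref{rhonond} plus a Vitali ink-spot covering---matches the paper's, and taking $c_1$ large is a legitimate simplification: it forces every ball contained in $B_R(z_0)$ to satisfy $r\le\tfrac15\dist(x,B)$ automatically, so the paper's three subcases would collapse to one. But the boundary gap you flag is real and your suggested repairs do not close it. Your balls $B_{r_z}(z)$ may have radius up to $2R$, so the new mass $B_{r_i}(z_i)\setminus\mathcal N^j(x)$ can sit entirely in $B_{3R}(z_0)\setminus B_R(z_0)$; what your summation actually proves is a lower bound on $|B_{3R}(z_0)\cap(\mathcal N^{j+1}(x)\setminus\mathcal N^j(x))|$, not on $|B_R(z_0)\cap(\mathcal N^{j+1}(x)\setminus\mathcal N^j(x))|$. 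Fix~(a) cannot work as written: \autoref{assumption} together with $\mathcal N^0(x)\subseteq\mathcal N^j(x)$ supplies \emph{lower} bounds for $|\mathcal N^j(x)\cap B|$, whereas capping $r_z$ requires an \emph{upper} density bound at the capped scale---and if the density there happens to be $\ge 1-2^{-d}\delta$ you recover no new mass at all. Fix~(b) is too vague; nothing prevents all the gain from landing outside $B_R(z_0)$. Note also that ``the freedom to take $c_1$ large'' is irrelevant to this boundary issue: it only controls positioning relative to $x$, not relative to $\partial B_R(z_0)$.

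The paper sidesteps the boundary problem by choosing, for each Lebesgue point $y\in\mathcal N^j(x)\cap B_R(z_0)$, a \emph{maximal} ball $B^y\subset B_R(z_0)$ with $y\in B^y$ and $|\mathcal N^j(x)\cap B^y|\ge(1-\delta)|B^y|$; maximality and the case hypothesis force equality, so the $\delta|B^y|$ of new mass lies inside $B_R(z_0)$ by construction. The cost is that, since the paper takes $c_1$ small (namely the constant from \autoref{lemm:ass}(A3), which also guarantees the initial bound $m_0\ge\widetilde{\mu}|B_R|$ used downstream in \autoref{cor:iteration2}), such a $B^y$ may be close to $x$; this is what forces the three subcases, with \autoref{nondegsmallerball} handling the worst one. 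Had you kept your large $c_1$ but used these contained maximal balls instead of balls centered at $z$ and grown by the intermediate value theorem, the argument would in fact be shorter than the paper's.
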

Before we address the proof of \autoref{iterationstep1}, we first need to prove an auxiliary result.
It is an geometric observation,
whose application in the proof of \autoref{iterationstep1} provides the existence of balls with desired properties.
\begin{lemma}\label{nondegsmallerball}
Let $R>0$, $z_0\in\R^d$ and $A$ be a measurable set. 
For any $c_0\in(0,1)$ and $0<\delta<3^{-d}$, if
\begin{equation}\label{lemma:ineqcontainer}
|A\cap B_R(z_0)| \geq (1-\delta) |B_R|,
\end{equation}
then there exists a ball $B_{c_0R}(z)\subset B_R(z_0)$ such that 
\begin{equation}\label{lemma:inequsubball}
|A\cap B_{c_0R}(z)| \geq (1-3^d\delta) |B_{c_0R}|.
\end{equation}
\end{lemma}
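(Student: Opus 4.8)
\textbf{Proof plan for Lemma \ref{nondegsmallerball}.}

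The statement is a purely measure-theoretic ``averaging'' fact: if $A$ fills up all but a $\delta$-fraction of a ball $B_R(z_0)$, then some sub-ball of radius $c_0 R$ inside it must be filled by $A$ up to a $3^d\delta$-fraction. The natural approach is a pigeonhole/averaging argument over a suitable family of sub-balls. First I would reduce to a convenient finite family: tile (or rather cover) $B_R(z_0)$ by a grid of balls of radius $c_0 R$ whose concentric dilates by a bounded factor still lie in $B_R(z_0)$, arranged so that (a) each such ball is contained in $B_R(z_0)$, and (b) the union of these balls covers $B_R(z_0)$ with controlled overlap — bounded, in fact, by a dimensional constant, and here one wants the constant to be exactly $3^d$. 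Concretely, place centers on a lattice of spacing comparable to $c_0 R$; a point of $B_R(z_0)$ then lies in at most $3^d$ of the chosen balls (this is where the exponent $3^d$ comes from), and every point is covered by at least one.

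Given such a family $\{B_{c_0 R}(z_i)\}_i$, I would argue by contradiction: suppose every one of them satisfies $|A^c \cap B_{c_0 R}(z_i)| > 3^d \delta\, |B_{c_0 R}|$. Summing over $i$ and using that the balls cover $B_R(z_0)$ with multiplicity at most $3^d$, we get
\[
3^d\,|A^c \cap B_R(z_0)| \;\ge\; \sum_i |A^c \cap B_{c_0 R}(z_i)| \;>\; 3^d \delta \,|B_{c_0 R}|\cdot (\#\text{balls}),
\]
and since the balls (being contained in $B_R(z_0)$, with bounded overlap) number at least $|B_R|/|B_{c_0R}|$ up to the overlap constant — more carefully, $\sum_i |B_{c_0R}| \ge |B_R(z_0)|$ because they cover it — the right side is at least $3^d\delta\,|B_R|$. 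Hence $|A^c\cap B_R(z_0)| > \delta |B_R|$, contradicting \eqref{lemma:ineqcontainer}. Therefore at least one ball $B_{c_0 R}(z)$ in the family has $|A^c\cap B_{c_0 R}(z)| \le 3^d\delta\,|B_{c_0R}|$, which is exactly \eqref{lemma:inequsubball}, and by construction $B_{c_0R}(z)\subset B_R(z_0)$.

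The main obstacle is bookkeeping the covering constants so that the exponent is precisely $3^d$ rather than merely some dimensional constant: one must choose the lattice spacing and verify both that the dilated balls stay inside $B_R(z_0)$ (which may force $c_0$ to be replaced by a slightly smaller radius, or the centers to be pulled inward near $\partial B_R(z_0)$) and that the pointwise multiplicity of the cover is at most $3^d$. A clean way to get exactly $3^d$ is to use, for the covering, balls whose radius is $c_0 R$ but centers spaced so that the \emph{inscribed cubes} tile $B_R(z_0)$; each point then lies in the ball around its own cube and around cubes adjacent to it, of which there are at most $3^d$ in $\R^d$, while the cubes themselves are disjoint and cover, giving $\sum_i |B_{c_0R}(z_i)| \ge \sum_i |\text{cube}_i| \ge |B_R(z_0)|$ for the lower count. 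If instead one only needs some dimensional constant $C(d)$ in place of $3^d$, the argument is immediate from Vitali-type covering; the sharp constant is the only delicate point, and it is what dictates the hypothesis $\delta < 3^{-d}$ so that $3^d\delta < 1$ and the conclusion is non-vacuous.
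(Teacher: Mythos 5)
Your counting step at the end is fine, but it rests on a covering whose existence you never establish and which, in the generality the lemma demands, does not exist. You need a family of balls of radius exactly $c_0R$, each contained in $B_R(z_0)$, covering $B_R(z_0)$ (at least up to a null set), with pointwise multiplicity at most $3^d$. For $c_0>1/2$ and $d\geq 2$ this is impossible: any ball $B_{c_0R}(z)\subset B_R(z_0)$ has $|z-z_0|\leq (1-c_0)R<c_0R$, so every admissible ball contains the center $z_0$; hence the multiplicity at $z_0$ equals the total number of balls used, while covering the points $\rho e$ with $\rho$ close to $R$ in all directions $e$ forces that number to be far larger than $3^d$ (in fact infinite, since each admissible ball meets the sphere $\partial B_{(1-\eta)R}(z_0)$ only in a cap whose angular radius shrinks to $0$ as $\eta\to 0$; for the same reason no \emph{finite} family of such balls covers $B_R(z_0)$ even up to measure zero). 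Even in the regime of small $c_0$, where a lattice construction is plausible, your justification of the $3^d$ bound is incorrect in high dimensions: the ball circumscribing a cube of side $s$ has radius $s\sqrt d/2$, which exceeds $3s/2$ once $d\geq 10$, so a point can lie in the ball of a non-adjacent cube and the ``own cube plus neighbours'' count no longer gives $3^d$; moreover the balls of boundary cubes stick out of $B_R(z_0)$, and pulling their centers inward destroys the multiplicity count. So the proposal has a genuine gap exactly at the point you yourself flag as ``the only delicate point,'' and it is not merely bookkeeping.

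The paper's proof avoids multiplicity altogether and gets $3^d$ for free. Starting from a covering of $B_R(z_0)$ by balls of radius $c_0R$ contained in $B_R(z_0)$, the Vitali covering lemma yields a \emph{disjoint} subfamily $B^1,\dots,B^N\subset B_R(z_0)$ whose dilates $3B^j$ cover $B_R(z_0)$. Arguing by contradiction as you do, disjointness replaces your multiplicity bound: $|A^c\cap B_R(z_0)|\geq \sum_j |A^c\cap B^j| > 3^d\delta \sum_j |B^j| \geq \delta |B_R|$, the last step because $\sum_j 3^d|B^j|=\sum_j|3B^j|\geq |B_R|$. This contradicts \eqref{lemma:ineqcontainer}, and the factor $3^d$ is precisely the volume ratio $|3B|/|B|$; so your closing remark that Vitali-type arguments only yield an unspecified dimensional constant, and that the sharp constant requires the lattice construction, is backwards. (Note also that for $c_0\geq 1/3$ the lemma is trivial: the concentric ball $B_{c_0R}(z_0)$ works, since $|A^c\cap B_{c_0R}(z_0)|\leq \delta|B_R|=\delta c_0^{-d}|B_{c_0R}|\leq 3^d\delta|B_{c_0R}|$; it is only for small $c_0$ that any covering argument is needed, and there the Vitali route is the clean one.)
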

\begin{proof}
For any finite covering of $B_R(z_0)$ with balls of radius $c_0R$, the Vitali covering lemma implies the existence of a subcollection of disjoint balls $B^1,\dots, B^N$ with
$B^j\subset B_R(z_0)$ and $B_R(z_0)\subset (3B^1\cup\dots\cup 3B^N)$. 
Note that $|A^c\cap B_R(z_0)| \leq \delta |B_R|$ and $|A^c\cap B_{c_0R}(z)| \leq 3^d\delta |B_{c_0R}|$ are equivalent formulations of \eqref{lemma:ineqcontainer} 
and \eqref{lemma:inequsubball} respectively. We prove the assertion by contradiction. Assume \eqref{lemma:inequsubball} is false, that is
$|A^c\cap B^j| > 3^d\delta |B^j|$ for all $j\in\{1,\dots,N\}$. Hence,
\[ |A^c\cap B_R(z_0)|\geq \sum_{j=1}^N |A^c\cap B^j| >  \sum_{j=1}^N 3^d\delta |B^j| \geq \delta |B_R|.\]
\end{proof}
We finally have all tools to prove the second main result concerning the auxiliary sets of non-degeneracy. 
\begin{proof}[Proof of \autoref{iterationstep1}]
Let $\mu\in(0,1)$ and $\lambda>0$ such that $K$ satisfies \autoref{assumption}.
By \autoref{prop:nondegnested}, there is a constant $c\in(0,1]$ such that the sequence $a_j=c^j\lambda$ satisfies for any $n\geq 0$ and $x\in\R^d$, 
\[\mathcal{N}^0(x)\subset \mathcal{N}^1(x)\subset \mathcal{N}^2(x)\subset \cdots  \subset \mathcal{N}^n(x)\subset\mathcal{N}^{n+1}(x)\]
almost everywhere. Recall that by \autoref{lemm:ass}, \autoref{assumption} is equivalent
to the existence of $\widetilde{\mu}\in(0,1)$ and $c_1>0$, depending only on $d$ and $\mu$, 
such that for every ball $B_R(z_0)$ with $|x-z_0|=(1+c_1)R$:
\[ |\mathcal{N}^0(x)\cap B_R(z_0)|\geq \widetilde{\mu}|B_R|. \]
Let $\delta=\widetilde{\mu}/3^{d+1}$. 

We distinguish between two cases:\\
\textbf{Case 1:} Assume $|B_R(z_0)\cap \mathcal{N}^j(x)|<(1-\delta)|B_R|$.\\
Let $y\in \mathcal{N}^j(x)\cap B_R(z_0)$ be a Lebesgue point and $B^y$ be the largest ball  in $B_R(z_0)$ with $y\in B^y$ and $|\mathcal{N}^j(x)\cap B^y|\geq (1-\delta)|B^y|$. 
Since $B^y$ is chosen to be the largest ball
satisfying $|\mathcal{N}^j(x)\cap B^y|\geq (1-\delta)|B^y|$ and we assumed $|B_R(z_0)\cap \mathcal{N}^j(x)|<(1-\delta)|B_R|$, we conclude by continuity 
\begin{equation}\label{Bydelta}
|B^y\cap\mathcal{N}^j(x)|= (1-\delta)|B^y|.
\end{equation}
Let $r^y$ denote the radius of $B^y$.

We distinguish between three subcases:
\begin{enumerate}
\item Assume $r^y\leq \frac15\dist(x,B^y)$. 
Recall that by \autoref{rhonond}, $y\in \mathcal{N}^{j+1}(x)$ for all $y\in\R^d$ with $\rho_{\delta}^j(x,y)>0$. 
Since $B^y$ satisfies \eqref{Bydelta} and $r^y\leq \frac15\dist(x,B^y)$, we have $\rho_{\delta}^j(x,y)>0$ for all $y\in B^y$ and therefore $B^y\subset \mathcal{N}^{j+1}(x)$. Hence, we obtain
\[|B^y\cap \left(\mathcal{N}^{j+1}(x)\setminus \mathcal{N}^{j}(x)\right)|=\delta |B^y|.\]
\item\label{covcase} Assume $r^y> \frac15\dist(x,B^y)$. In addition, we assume there is a covering for $B^y$ by a family of balls $(B_{i})_{i=1,\dots,N}$ satisfying for all $i\in\{1,\dots,N\}$
\begin{itemize}
\item $B_i$ has radius $\frac{1}{5}\dist(x,B_i)$, 
\item $|B_i\cap\mathcal{N}^j(x)|\geq (1-3^d\delta)|B_i|$.
\end{itemize}  
Using the property $3^d\delta<\widetilde{\mu}/2$ and \autoref{rhonond}, we deduce $B_i\subset \mathcal{N}^{j+1}(x)$ for all $i\in\{1,\dots,N\}$. Therefore,
$B^y\subset \mathcal{N}^{j+1}(x)$ and
\[|B^y\cap \left(\mathcal{N}^{j+1}(x)\setminus \mathcal{N}^{j}(x)\right)|=\delta |B^y|.\]
\item\label{nocovcase} Assume $r^y> \frac15\dist(x,B^y)$ and there is no covering as in \eqref{covcase}. 
In this case we show that there is a small ball inside $B^y$ whose radius is comparable to $r^y$ and for which we can apply \autoref{rhonond}. 

First note that since we assume that there is no covering as in the second subcase, 
we can find a ball $B\subset B_R(z_0)$ with radius $\frac15\dist(x,B)$ and $|B\cap \mathcal{N}^j(x)|<(1-3^d\delta)|B|$.
Applying \autoref{nondegsmallerball} for $A=\mathcal{N}^j(x)$, there is a ball ${B}^{\ast}\subset B^y$ with same radius as $B$ such that $|{B}^{\ast}\cap\mathcal{N}^j(x)|\geq(1-3^d\delta)|{B}^{\ast}|$. 
Hence by continuity, we can find a ball $\widetilde{B}\subset B^y$ with same radius as $B$ and $B^{\ast}$ such that $|\widetilde{B}\cap\mathcal{N}^j(x)|=(1-3^d\delta)|\widetilde{B}|$.
By \autoref{rhonond}, $\widetilde{B}\subset \mathcal{N}^{j+1}(x)$. Since $\widetilde{B}\subset B^y\subset B_R(z_0)$, the radii satisfy
\[\frac{c_1r^y}{5}\leq\frac{c_1R}{5}\leq \frac{1}{5}\dist(x,\widetilde{B}) \leq\frac15\dist(x,B^y)<r^y.\]
We conclude
\[|B^y\cap \left( \mathcal{N}^{j+1}(x)\setminus \mathcal{N}^j(x) \right)|\geq 3^d\delta |\widetilde{B}| \geq \delta\frac{(3c_1)^d}{5^d}|B^y|.\]
\end{enumerate}

\begin{figure}[htb]
\includegraphics[width=0.8\linewidth]{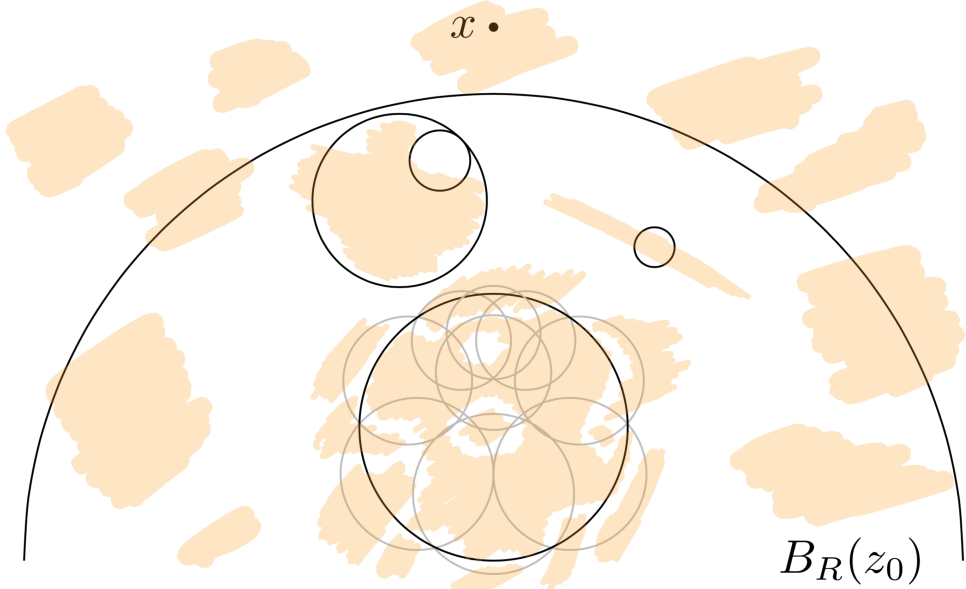}
\caption{The figure shows the ball $B_R(z_0)$ and the set $\mathcal{N}^j(x)\cap B_R(z_0)$. 
The small balls inside $B_R(z_0)$ on the upper right represent the first subcase.
The ball on the upper left represents the third subcase, where we can see a smaller ball $B$ inside $B^y$ with radius 
$\dist(x,B)/5$ and $|B_i\cap \mathcal{N}^j(x)|\geq |1-3^d)\delta |B_i|$.
The central ball represents $B^y$ in the second subcase, which satisfies $r^y= \frac15\dist(x,B^y)>\frac15\dist(x,B^y)$. The gray balls demonstrate a covering $B_i$ of $B^y$ satisfying $\radius(B_i)=\dist(x,B_i)/5$ and $|B_i\cap \mathcal{N}^j(x)|<(1-3^d)\delta |B|$.}
\end{figure}

The family of balls $B^y$ covers $B_R(z_0)\cap \mathcal{N}^j(x)$ almost everywhere.\\
Using the Vitali covering lemma, we can select a finite subcollection of non-overlapping balls $B^j$ such that 
$\left(B_R(z_0)\cap\mathcal{N}^j(x)\right)\subset (3B^1\cup\dots\cup 3B_N)$ expect for a set of measure zero.\\
Altogether,
\begin{align*}
|B_R(z_0) \cap \left(\mathcal{N}^{j+1}(x)\setminus \mathcal{N}^j(x)\right)| 
& \geq  \sum_{j=1}^N |\left(\mathcal{N}^{j+1}(x)\setminus \mathcal{N}^j(x)\right)\cap B^j| \geq \sum_{j=1}^N \delta \left(\frac{3c_1}{5}\right)^d |B^j| \\
& = \delta \left(\frac{c_1}{5}\right)^d  \sum_{j=1}^N |3 B^j| \geq \delta \left(\frac{c_1}{5}\right)^d |B_{R}(z_0)\cap\mathcal{N}^j(x)|.
\end{align*}
Hence there is $c_3>0$, depending on $d$ and $\mu$, such that
\begin{align*} 
|B_{R}(z_0)\cap\mathcal{N}^{j+1}(x)| 
 \geq \left(1+c_3\right) |B_{R}(z_0)\cap \mathcal{N}^j(x)|. 
\end{align*}
\textbf{Case 2:} Assume $|B_R(z_0)\cap \mathcal{N}^j(x)|\geq(1-\delta)|B_R|$.\\
In this case we do not cover $B_R(z_0)\cap \mathcal{N}^j(x)$ by a family of balls and consider directly $B_R(z_0)$.
We make a distinction between the following two subcases:
\begin{enumerate} \setcounter{enumi}{3}
\item If there exists a covering of $B_R(z_0)$ as in \eqref{covcase}, then we conclude with the same argument as in \eqref{covcase} and conclude
$B_{R}(z_0)\subset \mathcal{N}^{j+1}(x)$.
\item If there is no covering of $B_R(z_0)$ as in \eqref{covcase}, then we proceed as in \eqref{nocovcase}. 

In this case, there is a ball $B\subset B_R(z_0)$ with radius $\frac15\dist(x,B)$ such that $|B\cap\mathcal{N}^j(x)|=(1-3^d\delta)|B|$
and $B\subset \mathcal{N}^{j+1}(x)$. Hence, 
\[|B_R(z_0)\cap \left( \mathcal{N}^{j+1}(x)\setminus \mathcal{N}^j(x) \right)|\geq 3^d\delta |B| =c_4|B_R|\]
for some $c_4>0$, depending on $d$ and $\mu$. \\
Proceeding as in Case 1, finishes the proof.
\end{enumerate}
\end{proof}
An immediate consequence of \autoref{iterationstep1} is the following corollary. 
It gives us an upper bound for the amount of steps we need until the set of non-degeneracy fills up the whole space.
It is important to emphasize that the amount of steps does only depend on $\mu$ and $d$.
\begin{corollary}\label{cor:iteration2}
Assume there exist $\mu\in(0,1)$ and $\lambda>0$ such that $K$ satisfies \autoref{assumption}.
There is $n_0\in\N$, depending only on $\mu$ and $d$, such that for every $n\geq n_0$ and $x\in\R^d$,
\[\mathcal{N}^{n}(x)=\R^d \text { a.e.}.\]
\end{corollary}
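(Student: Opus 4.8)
The plan is to derive \autoref{cor:iteration2} directly from the growing ink-spots dichotomy \autoref{iterationstep1}, combined with the uniform density bound furnished by the equivalent formulation (A3) in \autoref{lemm:ass} and the nestedness from \autoref{prop:nondegnested}. The mechanism is elementary: inside a suitably placed ball the density of $\mathcal{N}^j(x)$ can never start from zero, and by \eqref{eq:growingink} it must then increase by a fixed multiplicative factor at each step until the ball is completely filled, so the number of steps needed to saturate the ball is bounded purely in terms of $d$ and $\mu$.

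Concretely, fix $x\in\R^d$. By \autoref{lemm:ass} there are $\widetilde\mu\in(0,1)$ and $c_1>0$, depending only on $d$ and $\mu$, such that $|\mathcal{N}^0(x)\cap B_R(z_0)|\geq\widetilde\mu\,|B_R|$ for every ball $B_R(z_0)$ with $|x-z_0|=(1+c_1)R$; by \autoref{prop:nondegnested} the same lower bound holds for each $\mathcal{N}^j(x)$ up to a set of measure zero, so the ratio in \eqref{eq:growingink} is always well defined and positive. Let $c_2$ be the constant from \autoref{iterationstep1} and set
\[ n_0:=\Big\lceil \frac{\log(1/\widetilde\mu)}{\log(1+c_2)}\Big\rceil+1. \]
Fix an admissible ball $B=B_R(z_0)$ with $|x-z_0|=(1+c_1)R$, and suppose $B\not\subset\mathcal{N}^{n_0}(x)$ up to measure zero. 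Since the first alternative in \eqref{eq:growingink}, once it holds for some index, persists for all larger indices by nestedness, the second alternative must hold for every $j\in\{0,1,\dots,n_0-1\}$; iterating it gives $|\mathcal{N}^{n_0}(x)\cap B|\geq(1+c_2)^{n_0}\widetilde\mu\,|B_R|>|B_R|$, which is absurd. Hence $B\subset\mathcal{N}^{n}(x)$ a.e.\ for every such ball and every $n\geq n_0$, with $n_0$ depending only on $d$ and $\mu$.

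It remains to deduce $\mathcal{N}^{n}(x)=\R^d$ a.e. Every $w\in\R^d\setminus\{x\}$ belongs to the admissible ball $B_{R}(w)$ with $R:=|x-w|/(1+c_1)$, so $\R^d\setminus\{x\}$ is a union of admissible balls. To control the exceptional null sets I would pass to a countable subfamily with the same union: for each $k\in\Z$ choose a countable dense set of centers on the sphere $\{z_0:|x-z_0|=(1+c_1)2^{k}\}$ and take the corresponding balls of radius $2^{k}$, which cover the annulus $\{w: c_1 2^{k}<|x-w|<(2+c_1)2^{k}\}$; letting $k$ run over $\Z$ covers $\R^d\setminus\{x\}$ since consecutive annuli overlap (here one uses $c_1<1$). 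As $\mathcal{N}^{n}(x)$ contains each ball of this countable family up to a null set, it contains their union up to a null set, i.e.\ $\mathcal{N}^{n}(x)=\R^d$ a.e., for all $n\geq n_0$. The only delicate point is precisely this bookkeeping of the ``almost everywhere'' clauses over a family of balls — all the analytic content sits inside \autoref{iterationstep1} — which is why the reduction to a countable subcover is carried out explicitly.
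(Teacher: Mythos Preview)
Your proof is correct and follows essentially the same approach as the paper's: iterate the dichotomy of \autoref{iterationstep1} starting from the (A3) density lower bound until each admissible ball is filled, then cover $\R^d\setminus\{x\}$ by such balls. The paper's argument is terser and does not explicitly pass to a countable subcover to manage the almost-everywhere exceptional sets, but the substance is identical.
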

\begin{proof}
Let $x\in\R^d$. By \autoref{iterationstep1}, there are constants $c_1,c_2>0$, depending on $d$ and $\mu$ only, such that \eqref{eq:growingink}
holds for all balls $B_R(z_0)$ with $|x-z_0|=(1+c_1)R$. 
Choosing $n_0\geq \log(\mu^{-1})/\log(1+c_2)$ implies $B_R(z_0)\subset \mathcal{N}^n(x)$ a.e.. 
Since the choice of $n_0$ is independent of $R$ and $z_0$, we conclude $\mathcal{N}^{n}(x)=\R^d$ except for a set of measure zero.
\end{proof}
\section{Proofs of the main results}
\label{s:theorems}
In this section we prove the coercivity estimates \autoref{thm:coercive} and \autoref{thm:local}. We have already proven all tools we need to deduce those results. 
\autoref{thm:coercive} is an immediate consequence of \autoref{kernelsdecreasing} and \autoref{cor:iteration2}. The proof of \autoref{thm:local} needs some additional work. 
For the sake of clarity, we will separate parts of its proof into lone results, see \autoref{subseclocal}.
\subsection{Proof of \autoref{thm:coercive}}
\begin{proof}
Let $\mu\in(0,1)$ and $\lambda>0$ be such that $K$ satisfies \autoref{assumption}.
By \autoref{cor:iteration2}, there is $n\in\N$, depending on $d$ and $\mu$, such that for every $x\in\R^d$, $\mathcal{N}^{n}(x)=\R^d$ a.e..
Thus $K^n(x,y)\geq a_n|x-y|^{-d-2s}$ for almost every pair $(x,y)\in\R^d\times\R^d$. Hence, by \autoref{kernelsdecreasing} there is a constant $c_1>0$ depending on $n$, such that
\begin{align*} 
 \int_{\R^d}\int_{\R^d} (u(x)-u(y))^2 K(x,y)\, \d y\, \d x & \geq c_1 \int_{\R^d}\int_{\R^d} (u(x)-u(y))^2 K^n(x,y)\, \d y\, \d x \\
 & \geq  c_1\cdot a_n\int_{\R^d}\int_{\R^d} (u(x)-u(y))^2 |x-y|^{-d-2s}\, \d y\, \d x \\
 & = c_1\cdot a_n\|u\|^2_{\dot{H}^s(\R^d)}.
 \end{align*}
 Recall that by \autoref{prop:nondegnested} the sequence $a_n$ is given by $a_n=c^n\lambda$ for some constant $c>0$, depending on $d$ and $\mu$, which finishes the proof.
\end{proof}
\subsection{Proof of \autoref{thm:local}}\label{subseclocal}
In this subsection we prove \autoref{thm:local}. The idea of the proof is to cover $B_1$ by small balls, whose radii depend on the dimension $d$ and 
the value of $\mu$ from \autoref{assumption}. 
We first show that for any given ball, 
there is a scaling factor for the radius such that the local energy form for $K$ on the scaled ball can be bounded from below by the $H^s$-seminorm on the original ball.
\begin{lemma}\label{smallballlemma}
Assume there exist $\lambda>0$ and $\mu\in(0,1)$ such that $K$ satisfies \autoref{assumption}.
There are constants $c>0$ and $n\in\N$, depending on $d$ and $\mu$, such that for every function $u:\R^d\to\R$ and every ball $B\subset\R^d$
\begin{equation}\label{smallballsest} 
 \int_{5^nB}\int_{5^nB} (u(x)-u(y))^2 K(x,y)\, \d y\, \d x \geq c\lambda\|u\|^2_{\dot{H}^s(B)}.
 \end{equation}
\end{lemma}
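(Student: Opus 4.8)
The strategy is to reduce Lemma~\ref{smallballlemma} to the already-established machinery by a scaling argument. Fix a ball $B = B_R(x_0)$. The right number $n$ will be the iteration count $n_0$ supplied by \autoref{cor:iteration2}, which depends only on $d$ and $\mu$; for that value of $n$, \autoref{kernelsdecreasinglocal} gives
\[ \int_{5^nB}\int_{5^nB} (u(x)-u(y))^2\, K(x,y)\, \d y\, \d x \;\geq\; c\int_{B}\int_{B} (u(x)-u(y))^2\, K^{n}(x,y)\, \d y\, \d x, \]
so it suffices to bound the right-hand side below by $c\lambda\|u\|^2_{\dot H^s(B)}$. The key point is that after $n = n_0$ diffusion steps the set of non-degeneracy $\mathcal{N}^n(x)$ equals all of $\R^d$ up to a null set, for every $x$, and $a_n = c^n\lambda$; hence $K^n(x,y) \geq a_n |x-y|^{-d-2s}$ for a.e.\ pair $(x,y)$. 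Plugging this pointwise bound into the double integral over $B\times B$ immediately yields
\[ \int_{B}\int_{B} (u(x)-u(y))^2\, K^{n}(x,y)\, \d y\, \d x \;\geq\; a_n \int_{B}\int_{B} \frac{(u(x)-u(y))^2}{|x-y|^{d+2s}}\, \d y\, \d x \;=\; a_n\, \|u\|^2_{\dot H^s(B)}, \]
and combining with the previous display gives \eqref{smallballsest} with constant $c\cdot c^{n}$, which depends only on $d$ and $\mu$ since both $c$ and $n$ do.

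One subtlety to address carefully is that \autoref{cor:iteration2} and the entire construction of the kernels $K^j$ and sets $\mathcal{N}^j$ were carried out for a generic kernel $K$ satisfying \autoref{assumption}, with constants depending on $d$ and $\mu$ only and \emph{not} on any particular ball. Therefore the conclusion $\mathcal{N}^n(x) = \R^d$ a.e.\ holds globally, and in particular the pointwise lower bound $K^n(x,y) \geq a_n|x-y|^{-d-2s}$ holds for a.e.\ $(x,y)\in\R^d\times\R^d$, which is all that is needed when restricting the integration to $B\times B$. No rescaling of the kernel itself is required: the statement is genuinely uniform over all balls $B$ because the hypothesis \autoref{assumption} is scale-invariant (it quantifies over \emph{every} ball) and all constants in Sections~\ref{s:kernels} and~\ref{s:nondeg-sets} inherit this uniformity.

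The main (and essentially only) obstacle is bookkeeping: one must make sure that the single integer $n$ works simultaneously for the localization lemma \autoref{kernelsdecreasinglocal} and for the space-filling corollary \autoref{cor:iteration2}. Both produce an $n$ depending only on $d$ and $\mu$, so one simply takes the maximum of the two (or observes that \autoref{kernelsdecreasinglocal} holds for \emph{every} $n$, so one just uses $n = n_0$ from \autoref{cor:iteration2}). After that the proof is a two-line chain of inequalities as displayed above. I would write it as: invoke \autoref{cor:iteration2} to get $n$ and the a.e.\ bound on $K^n$; invoke \autoref{kernelsdecreasinglocal} with that $n$; substitute; and read off the constant, noting $a_n = c^n\lambda$ from \autoref{prop:nondegnested}.
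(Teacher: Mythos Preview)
Your proposal is correct and follows essentially the same approach as the paper's own proof: invoke \autoref{cor:iteration2} to obtain $n$ with $K^n(x,y)\geq a_n|x-y|^{-d-2s}$ a.e., then apply \autoref{kernelsdecreasinglocal} for that $n$ and combine. The paper's proof is in fact just a one-sentence reference to these two results (``Proceeding as in the proof of \autoref{thm:coercive}, by \autoref{cor:iteration2} and \autoref{kernelsdecreasinglocal}\dots''), so your version is a faithful expansion of the same argument.
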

\begin{proof}
Let $\mu\in(0,1)$ and $\lambda>0$ be such that $K$ satisfies \autoref{assumption}.
Proceeding as in the proof of \autoref{thm:coercive}, by \autoref{cor:iteration2} and \autoref{kernelsdecreasinglocal} there are constants $c_1>0$ and $n\in\N$, depending on $d$ and $\mu$, such that for every ball $B\subset\R^d$ the assertion follows.
\end{proof}
Let $\mathcal{C}$ be a finite covering of $B_1$ with balls $B^j$ satisfying $\radius(B^j) = \frac{1}{3\cdot 5^{n}}$ and $\cent(B^j)\in B_1$. 
Since $\mathcal{C}$ consists of balls with same radius, such covering $\mathcal{C}$ can be chosen such that $|\mathcal{C}|$ depends on the radius of those balls and the dimension only. A rough covering of a cube with side length $2$ by such balls can be chosen with less then $(2+6\cdot 5^n)^d$ balls and therefore $B_1$ can be covered by less then $(2+6\cdot 5^n)^d$ balls. The radius of the covering balls is chosen so small such that for every covering ball the $3\cdot 5^n$-scaled ball remains inside $B_2$. 
\begin{figure}[htb]
\includegraphics[scale=0.16]{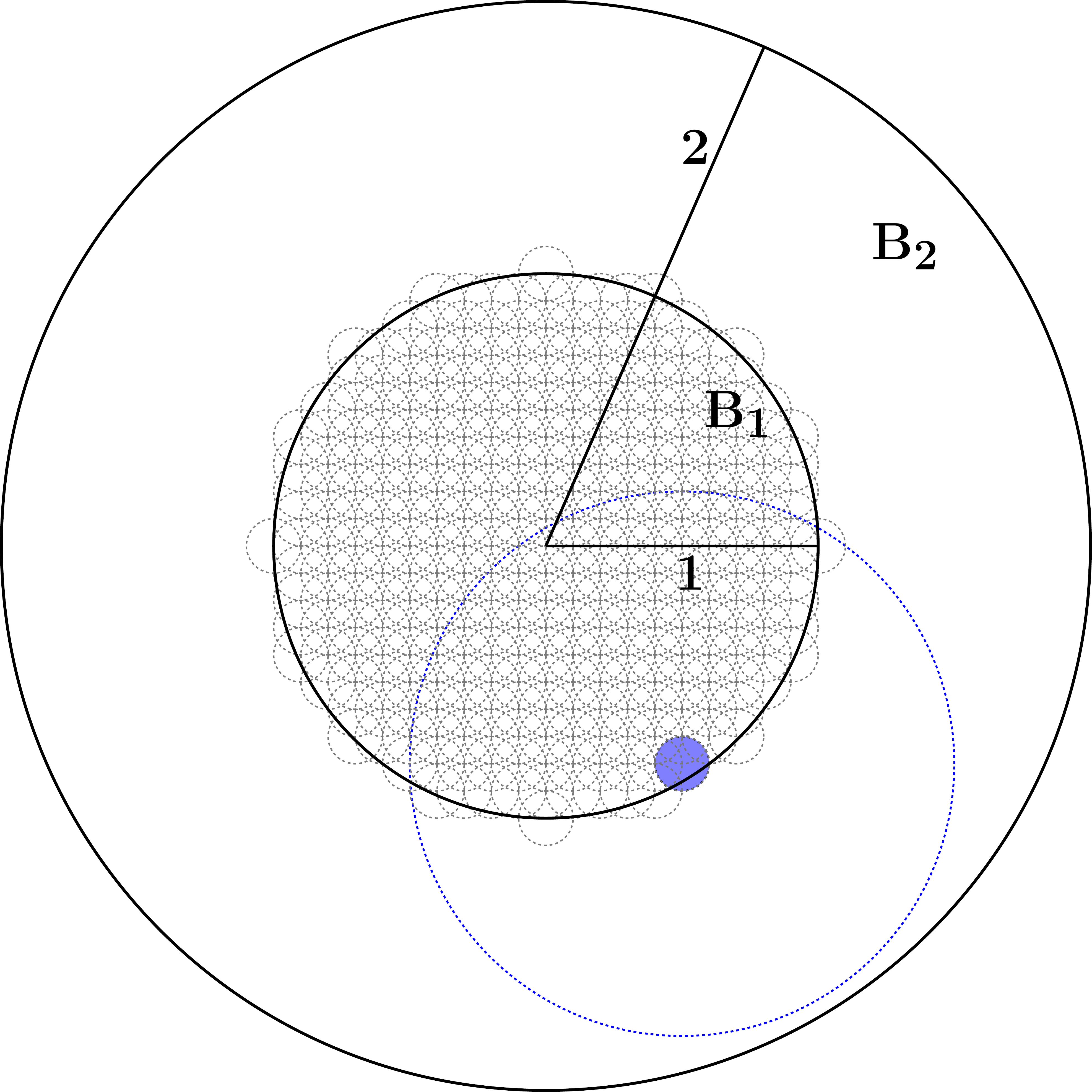}
\caption{Illustration of a rough covering of $B_1$ by small balls and an example of a covering ball and the $3\cdot 5^n$ scaling of this ball.}
\end{figure}

\begin{proposition}\label{propBkBl}
Assume there exist $\lambda>0$ and $\mu\in(0,1)$ such that $K$ satisfies \autoref{assumption}. 	
Let $B^k, B^l$ be two balls with $\cent(B^k), \cent(B^l) \in B_1$ and $\radius(B^k)=\radius(B^l)=\frac{1}{3\cdot 5^{n}}$.
There is a constant $c>0$, depending on $d$ and $\mu$, such that for every function $u:\R^d\to\R$
\begin{equation}\label{B2B2BkBl}
\int_{B_2}\int_{B_2} (u(x)-u(y))^2 K(x,y)\, \d y\, \d x \geq c\lambda \int_{B^k}\int_{B^l} (u(x)-u(y))^2 |x-y|^{-d-2s}\, \d y\, \d x.
\end{equation} 
\end{proposition}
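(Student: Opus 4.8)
The plan is to reduce the bilinear estimate over the pair $B^k \times B^l$ to the quadratic (diagonal) estimate \eqref{smallballsest} provided by Lemma \ref{smallballlemma}, using a chain of intermediate balls connecting $B^k$ to $B^l$. First I would observe that since $\cent(B^k), \cent(B^l) \in B_1$ and both radii equal $\tfrac{1}{3 \cdot 5^n}$, there is a chain $B^k = A_0, A_1, \dots, A_m = B^l$ of balls of the same radius, with $m$ bounded by a constant depending only on $d$ and $n$ (hence only on $d$ and $\mu$), such that consecutive balls $A_i, A_{i+1}$ overlap in a set of measure at least a fixed fraction of $|A_i|$, and such that every $A_i$ has center in $B_1$ so that $5^n A_i \subset B_2$. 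For the left-hand side, note that each term $\int_{5^n A_i}\int_{5^n A_i}(u(x)-u(y))^2 K(x,y)\,\d x\,\d y$ is bounded above by $\int_{B_2}\int_{B_2}(u(x)-u(y))^2 K(x,y)\,\d x\,\d y$ since $K \geq 0$ and $5^n A_i \subset B_2$; summing over the $m+1$ balls and applying Lemma \ref{smallballlemma} to each $A_i$ gives
\[
\int_{B_2}\int_{B_2}(u(x)-u(y))^2 K(x,y)\,\d x\,\d y \;\geq\; \frac{c\lambda}{m+1}\sum_{i=0}^{m}\|u\|_{\dot H^s(A_i)}^2.
\]

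The remaining, purely geometric task is to bound $\int_{B^k}\int_{B^l}(u(x)-u(y))^2|x-y|^{-d-2s}\,\d x\,\d y$ by $C\sum_{i=0}^m \|u\|_{\dot H^s(A_i)}^2$ with $C$ depending only on $d$ and $\mu$. Since $B^k, B^l \subset B_2$, for $x \in B^k$ and $y \in B^l$ we have $|x-y| \geq c_0 \cdot \radius(B^k)$ is false in general (they may be close), but $|x-y| \leq 4$, so $|x-y|^{-d-2s}$ is bounded when $x,y$ are far apart and the integrand is controlled by an average of $(u(x)-u(y))^2$ over the two balls plus the genuinely singular near-diagonal part. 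The clean way is the standard telescoping / averaging trick: introduce the averages $\bar u_{A_i} = \fint_{A_i} u$, write $u(x) - u(y) = (u(x) - \bar u_{A_0}) + \sum_{i=0}^{m-1}(\bar u_{A_i} - \bar u_{A_{i+1}}) + (\bar u_{A_m} - u(y))$, use $(m+2)$-fold Cauchy–Schwarz, and bound each piece: the end pieces $\int_{B^k}|u(x) - \bar u_{A_0}|^2\,\d x$ and $\int_{B^l}|u(y)-\bar u_{A_m}|^2\,\d y$ are controlled by a fractional Poincaré inequality on a ball, i.e.\ by $C\,\radius^{2s}\|u\|_{\dot H^s(A_i)}^2 \leq C\|u\|_{\dot H^s(A_i)}^2$ (radii are universal constants here); each consecutive difference $|\bar u_{A_i} - \bar u_{A_{i+1}}|^2$ is controlled, using that $|A_i \cap A_{i+1}| \geq c|A_i|$, by $C(\|u\|_{\dot H^s(A_i)}^2 + \|u\|_{\dot H^s(A_{i+1})}^2)$; and finally the factor $|x-y|^{-d-2s}$ integrated over $B^k \times B^l$ after pulling out the (now $x,y$-independent) squared differences contributes only a universal constant times $\radius^{-2s} \cdot \radius^{2d} = $ const, again because all radii are fixed universal numbers.

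The main obstacle is bookkeeping rather than anything deep: one must make sure the chain of balls, the overlap fractions, the number $m$, and all the Poincaré constants depend only on $d$ and $\mu$ (through $n$), and that the scaled balls $5^n A_i$ genuinely sit inside $B_2$ — which is exactly why the covering balls were chosen with radius $\tfrac{1}{3\cdot 5^n}$ and centers in $B_1$. Combining the two displayed chains of inequalities, absorbing all universal constants into a single $c>0$ depending only on $d$ and $\mu$, yields \eqref{B2B2BkBl}.
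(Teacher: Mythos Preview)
Your high-level strategy --- connect $B^k$ to $B^l$ by a chain of overlapping balls of the same radius, apply Lemma~\ref{smallballlemma} on each, and telescope --- is exactly the paper's approach. The difference is in the telescoping device (you use ball averages $\bar u_{A_i}$ and fractional Poincar\'e; the paper inserts intermediate points $z_j$ averaged over the overlaps and replaces $|x-y|^{-d-2s}$ by $|z_{j-1}-z_j|^{-d-2s}$ directly) and, more importantly, in the handling of the near-diagonal case.

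There is a genuine gap when $B^k$ and $B^l$ overlap or are close. Your argument for the middle terms pulls the constant $|\bar u_{A_i}-\bar u_{A_{i+1}}|^2$ outside and integrates $|x-y|^{-d-2s}$ over $B^k\times B^l$, claiming this yields a universal constant; but $\iint_{B^k\times B^l}|x-y|^{-d-2s}\,\d x\,\d y=+\infty$ whenever $B^k\cap B^l\neq\emptyset$, since the singularity at $x=y$ is not integrable. The same divergence kills the end pieces: $\int_{B^l}|x-y|^{-d-2s}\,\d y=+\infty$ for $x\in B^k\cap B^l$, so $\int_{B^k}|u(x)-\bar u_{A_0}|^2\bigl(\int_{B^l}|x-y|^{-d-2s}\,\d y\bigr)\d x$ is not controlled by Poincar\'e. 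You flag the problem (``$|x-y|\geq c_0\,\radius(B^k)$ is false in general'') but never actually bound ``the genuinely singular near-diagonal part''. The paper cures this by a case split you are missing: when $\dist(B^k,B^l)\leq\radius(B^k)$, both balls sit inside a ball $\widetilde B$ of triple radius with $\cent(\widetilde B)\in B_1$, so $5^n\widetilde B\subset B_2$ (this is precisely why the radius was chosen as $\tfrac{1}{3\cdot 5^n}$); then $\int_{B^k}\int_{B^l}\leq\|u\|_{\dot H^s(\widetilde B)}^2$ and Lemma~\ref{smallballlemma} applied to $\widetilde B$ finishes. Only in the separated regime $\dist(B^k,B^l)\geq\radius(B^k)$, where $|x-y|$ is uniformly bounded below, does the chain/telescoping argument run. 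Add that one case and your plan is complete.
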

\begin{proof}
Let $\mu\in(0,1)$ and $\lambda>0$ be such that $K$ satisfies \autoref{assumption}.
By definition of the balls $B^k, B^l$, we have $5^nB^k, 5^nB^l\subset B_2$. 
In the following, we investigate three cases which relate to the positioning of the balls $B^k, B^k$.

\begin{enumerate} 
\item If $B^k=B^l$, then the assertion is an immediate consequence of \autoref{smallballlemma} and the observation $5^nB^k, 5^nB^l\subset B_2$.
\item\label{Case2BkBl} Let $B^k\neq B^l$ with $\dist(B^k, B^l)\leq \radius(B^k)$. In this case, we can cover the balls by a larger ball and again use \autoref{smallballlemma}. 

To be more precise,
we replace the area of integration $B^k\times B^l$ on the right-hand side of \eqref{B2B2BkBl} by 
$\widetilde{B}\times\widetilde{B}$ for some ball $\widetilde{B}$ with 
$\radius(\widetilde{B})=3\radius(B^k)$ and $\cent(\widetilde{B})\in B_1$ satisfying $B^k, B^l\subset \widetilde{B}$. 
Since $5^n\radius(\widetilde{B}) = 3\cdot 5^n \radius(B^k) = 1$, we have $5^n\widetilde{B}\subset B_2$ and therefore the assertion again follows by 
\autoref{smallballlemma}.

\begin{figure}[htb]
\includegraphics[scale=0.15]{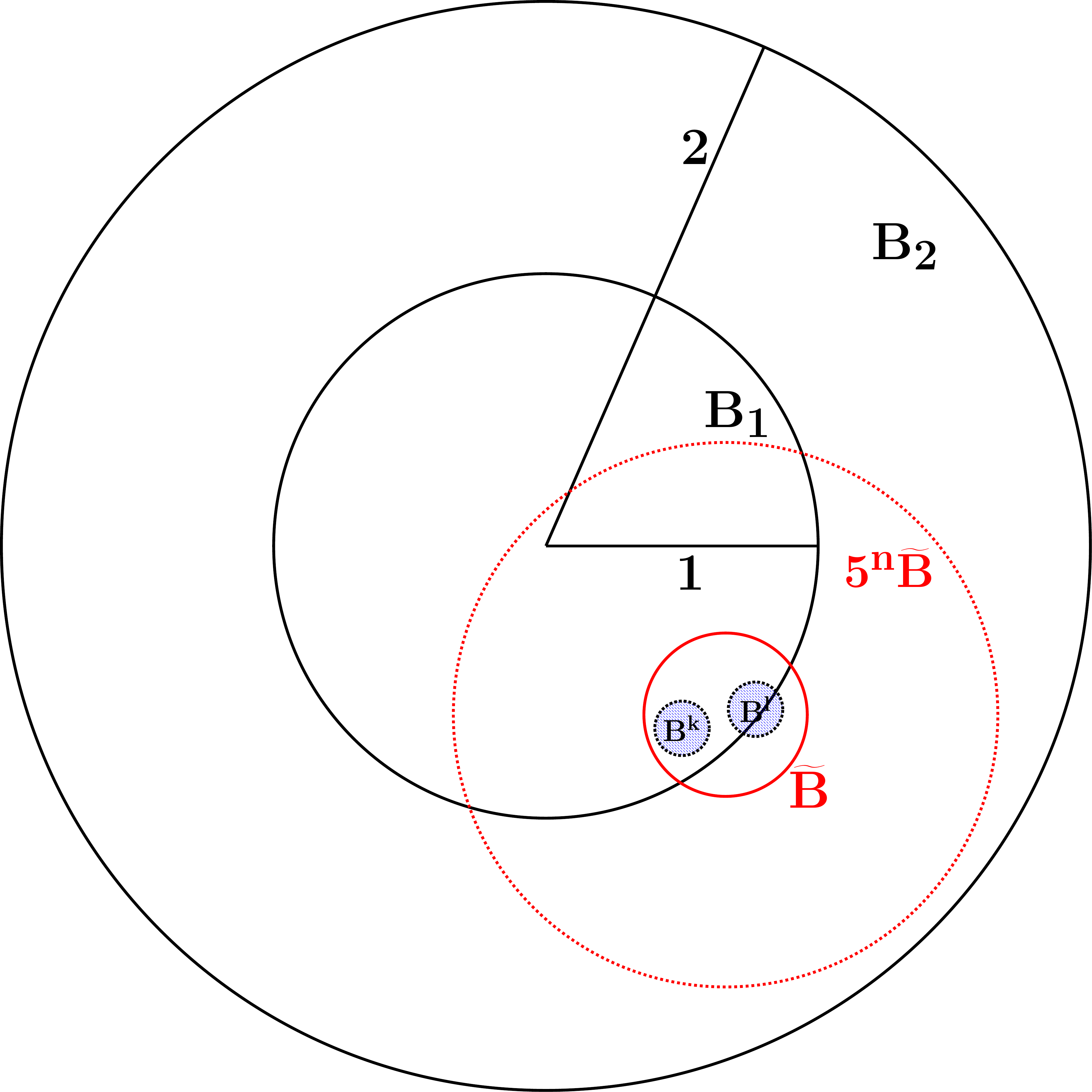}
\caption{The figure illustrates Case \eqref{Case2BkBl}. It shows an example of balls $B^k, B^l$ with $\dist(B^k, B^l)\leq \radius(B^k)$. The ball $\widetilde{B}$ contains these two balls, has its center in $B_1$ and triple radius. Its scaling $5^n\widetilde{B}$ is contained in 
$B_2$.}
\end{figure}

\item Let $B^k\neq B^l$ with $\dist(B^k, B^l)\geq \radius(B^k)$. 
In this case, the idea is to define a sequence of balls such that two consecutive balls intersect and we can estimate stepwise the corresponding double integrals.

We define a sequence of connecting balls 
$B^{k,l}_j$, $j\in\{1,\dots,N\}$ such that for every $j$ 
\begin{itemize}
\item $\radius(B^{k,l}_j)= \radius(B^k)$,
\item $\cent(B^{k,l}_j)\in \{(1-t)\cent(B^k) + t\cent(B^l)\colon t\in[0,1]\}$,
\item $|B^k\cap B^{k,l}_1|=\frac{1}{10}|B^k|=|B^{k,l}_{j+1}\cap B^{k,l}_j| = \frac{1}{10}|B^{k,l}_j|$
\item $|B^l\cap B^{k,l}_N|\geq\frac{1}{10}|B^l|$.
\end{itemize}

\begin{figure}[htb]
\includegraphics[scale=0.5]{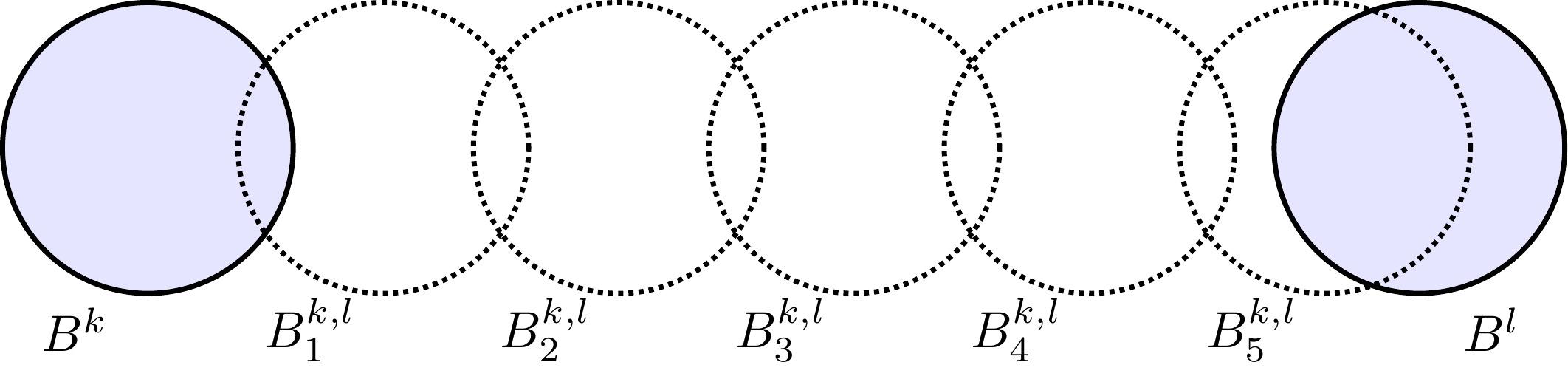}
\caption{Illustration of the balls $B^k, B^l$ with $\dist(B^k, B^l)\geq \radius(B^k)$ and the sequence $B^{k,l}_j$.}
\end{figure}
{\ } \\
Since $B^k, B^l\in \mathcal{C}$ and $\radius(B^k)=\frac{1}{3\cdot 5^n}$, we easily see $N\leq 2+6\cdot 5^n$. 
Hence, $N$ is bounded by a constant depending on $d$ and $\mu$ only.
We distinguish between the cases $N=1$ and $N\geq 2$. In the case $N=1$, we have
\begin{align*}
 \int_{B^k}\int_{B^l} & (u(x)-u(y))^2 |x-y|^{-d-2s}\, \d y\, \d x \\
& \leq \frac{100}{|B^k|^2}\int_{B^k}\int_{B^l} \int_{B^k\cap B^{k,l}_1}\int_{B^l\cap B^{k,l}_1} (u(x)-u(y))^2 |x-y|^{-d-2s} \,\d z_2\,\d z_1 \,  \d y\, \d x  \\
& \leq \frac{300}{|B^k|^2}\Big(\int_{B^k}\int_{B^l} \int_{B^k\cap B^{k,l}_1}\int_{B^l\cap B^{k,l}_1} (u(x)-u(z_1))^2 |x-y|^{-d-2s} \,\d z_2\,\d z_1 \,  \d y\, \d x  \\
& \qquad+  \int_{B^k}\int_{B^l} \int_{B^k\cap B^{k,l}_1}\int_{B^l\cap B^{k,l}_1} (u(z_1)-u(z_2))^2 |x-y|^{-d-2s} \,\d z_2\,\d z_1 \,  \d y\, \d x  \\
& \qquad+  \int_{B^k}\int_{B^l} \int_{B^k\cap B^{k,l}_1}\int_{B^l\cap B^{k,l}_1} (u(z_2)-u(y))^2 |x-y|^{-d-2s} \,\d z_2\,\d z_1 \,  \d y\, \d x\Big) \\
& =:\frac{300}{|B^k|^2}(A_1+A_2+A_3).
\end{align*}
The terms $A_1$ and $A_3$ can be estimated in the same spirit and therefore, we just investigate $A_1$ and $A_2$. 
Note, $|x-y|>|x-z_1|$ for all $x\in B^k, y\in B^l$, $z_1\in B^k\cap B^{k,l}_1$. By \autoref{smallballlemma},
\begin{align*}
A_1 &\leq \int_{B^k}\int_{B^l} \int_{B^k\cap B^{k,l}_1}\int_{B^l\cap B^{k,l}_1} (u(x)-u(z_1))^2 |x-z_1|^{-d-2s} \,\d z_2\,\d z_1 \,  \d y\, \d x \\
& \leq |B^k|^2\int_{B^k} \int_{B^k\cap B^{k,l}_1} (u(x)-u(z_1))^2 |x-z_1|^{-d-2s}\,\d z_1 \, \d x \\ 
& \leq |B^k|^2\int_{B^k} \int_{B^k} (u(x)-u(z_1))^2 |x-z_1|^{-d-2s}\,\d z_1 \, \d x \\ 
& \leq \frac{|B^k|^2}{c_1\lambda} \int_{B_2}\int_{B_2} (u(x)-u(y))^2 K(x,y)\, \d y\, \d x,
\end{align*}
for some constant $c_1>0$, depending on $d$ and $\mu$.
It remains to estimate $A_2$. 
Since $\dist(B^k,B^l)\geq \radius(B^k)$, we obtain $|x-y|\geq \frac{1}{2}|z_1-z_2|$ for all $x\in B^k, y\in B^l$, $z_1\in B^k\cap B^{k,l}_1$ and $z_2\in B^l\cap B^{k,l}_1$. Hence
\begin{align*}
A_2 &\leq 2^{d+4}\int_{B^k}\int_{B^l} \int_{B^k\cap B^{k,l}_1}\int_{B^l\cap B^{k,l}_1} (u(z_1)-u(z_2))^2 |z_1-z_2|^{-d-2s} \,\d z_2\,\d z_1 \,  \d y\, \d x \\
& \leq |B^k|^2 2^{d+4}\int_{B^{k,l}_1}\int_{B^{k,l}_1} (u(z_1)-u(z_2))^2 |z_1-z_2|^{-d-2s} \,\d z_2\,\d z_1 \\
& \leq \frac{|B^k|^2 }{c_2\lambda} \int_{B_2}\int_{B_2} (u(x)-u(y))^2 K(x,y)\, \d y\, \d x.
\end{align*}
for some constant $c_2>0$, depending on $d$ and $\mu$.
Combining these estimates proves the assertion in this case.

It remains to consider the case $N\geq2$.
To simplify notation, let us rename $x=z_0$ resp. $y=z_{n+2}$ and define $B^{k,l}_0:=B^k$ and $B^{k,l}_{N+1}:=B^l$. 
Let $z_0\in B^k$, $z_{N+2}\in B^l$ and $z_j\in B^{k,l}_{j-1}\cap B^{k,l}_{j}$ for $j\in\{1,\dots,N+1\}$. Since, $N\geq 2$, $|z_0-z_{N+2}|\geq |z_{j-1}-z_{j}|$ for all $j\in\{1,\dots,N+2\}$. Hence by the same idea as in the case $N=1$, we conclude
\begin{equation}
\begin{aligned}
& \int_{B^k}\int_{B^l} (u(z_0)-u(z_{N+2}))^2 |z_0-z_{N+2}|^{-d-2s}\, \d z_0\, \d z_{N+2} \\
& \leq \frac{(N+2)10^{N+1}}{|B^k|^{N+1}} \Big( \int_{B^k}\int_{B^l}\int_{B^k\cap B^{k,l}_1}\left(\prod_{i=1}^{N-1}\int_{B^{k,l}_i\cap B^{k,l}_{i+1}}\right)\int_{B^{k,l}_N\cap B^{l}} \\
& \hspace*{8em} \sum_{j=1}^{N+2} (u(z_{j-1})-u(z_j))^2 |z_0-z_{N+2}|^{-d-2s} \, \d z_{N+1}\, \cdots \d z_{1}\, \d z_{N+2}\, \d z_{0} \Big) \\
& \leq \frac{(N+2)10^{N+1}|B^k|^{N+1}}{10^{N-2}|B^k|^{N+1}}\sum_{j=1}^{N+2} \int_{B^{k,l}_{j-1}}\int_{B^{k,l}_{j-1}} (u(z_{j-1})-u(z_j))^2 |z_{j-1}-z_{j}|^{-d-2s}\, \d z_{j} \, \d z_{j-1} \\
&  \leq \frac{1}{c_3\lambda}\sum_{j=1}^{N+2} \int_{5^nB^{k,l}_{j-1}}\int_{5^nB^{k,l}_{j-1}} (u(z_{j-1})-u(z_j))^2 K(z_{j-1},z_{j})\, \d z_{j} \, \d z_{j-1} \\
& = \frac{1}{c_4\lambda} \int_{B_2}\int_{B_2} (u(x)-u(y))^2 K(x,y)\, \d y \, \d x,
\end{aligned}
\end{equation}
for some $c_3,c_4>0$, depending on $d$ and $\mu$.
\end{enumerate} 
\end{proof}
We can finally prove our second main result.
\begin{proof}[Proof of \autoref{thm:local}]
Let $\mu\in(0,1)$ and $\lambda>0$ be such that $K$ satisfies \autoref{assumption} and 
let $\mathcal{C}$ be a finite covering of $B_1$ with balls $B^j$ satisfying $\radius(B^j) = \frac{1}{3\cdot 5^{n}}$ and $\cent(B^j)\in B_1$. 
Then by \autoref{propBkBl} there is a constant $c>0$, depending on $d$ and $\mu$ such that for all $k,l$
\[\int_{B_2}\int_{B_2} (u(x)-u(y))^2 K(x,y)\, \d y\, \d x \geq c\lambda \int_{B^k}\int_{B^l} (u(x)-u(y))^2 |x-y|^{-d-2s}\, \d y\, \d x. \]
Hence,
\begin{align*}
 \|u\|^2_{\dot{H}^s(B_1)} & \leq \|u\|^2_{\dot{H}^s(\bigcup_{j} B^j)} =  \int_{\bigcup_{j} B^j}\int_{\bigcup_{j} B^j} (u(x)-u(y))^2 |x-y|^{-d-2s}\, \d y\, \d x \\
 & \leq \sum_{k,l} \int_{B^k}\int_{B^l} (u(x)-u(y))^2 |x-y|^{-d-2s}\, \d y\, \d x  \\
 & \leq \frac{|\mathcal{C}|^2}{c\lambda} \int_{B^k}\int_{B^l} (u(x)-u(y))^2 K(x,y)\, \d y\, \d x,
\end{align*}
which proves the assertion.
\end{proof}

\bibliographystyle{abbrv}
\bibliography{lit}
\end{document}